\theoremstyle{plain}
\newtheorem{Thm}{Theorem}[section]
\newtheorem{Lem}[Thm]{Lemma}
\newtheorem{Prop}[Thm]{Proposition}
\newtheorem{Cor}[Thm]{Corollary}
\theoremstyle{plain}
\numberwithin{equation}{section}
\newcommand{\Spec}{\mathrm{Spec}} 
\newcommand{\Len}{\mathrm{length}}
\newcommand{\Div}{\mathrm{div}}
\newcommand{\cO}{\mathcal{O}} 
\newcommand{\cL}{\mathcal{L}} 
\newcommand{\cI}{\mathcal{I}}
\newcommand{\ie}{{\it i.e.}, }
\begin{document}
\title{On Pluri-canonical Systems of Arithmetic Surfaces}
\author{Yi GU}
\address{
Peking University\\
5, Yiheyuan Road,
Beijing, China
}
\email{pkuguyi2010@gmail.com}

\address{
Institut de Math\'ematiques de Bordeaux \\
Universit\'{e} de Bordeaux\\
351, Cours de la Lib\'eration \\
33405 Talence, France 
}
\email{Yi.Gu@math.u-bordeaux1.fr} 

\begin{abstract}
Let $S$ be a Dedekind scheme with perfect residue fields at closed points, let $f: X\rightarrow S$ be a minimal regular arithmetic surface of fibre genus at least $2$ and let $f': X'\rightarrow S$ be the canonical model of $f$. It is well known that $\omega_{X'/S}$ is relatively ample. In this paper we prove that $\omega_{X'/S}^{\otimes n}$ is relative very ample for all $n\geq 3$.
\end{abstract}
\thanks{}
\maketitle

\begin{section}{Introduction}\label{intro} 
Let $S$ be a Dedekind scheme, let $f: X\rightarrow S$ be a minimal regular arithmetic surface of (fibre) genus at least $2$ (see \cite{LQ}, \S 9.3), and 
let $\omega_{X/S}$ be the relative dualizing invertible sheaf on $X$.
It is well known that $\omega_{X/S}^{\otimes n}$ is globally generated when $S$ is
affine and $n$ is 
big enough (actually $n\ge 2$ suffices, see \cite{L}, Theorem 7 and Remark~\ref{rmk} below). 
But in general $\omega_{X/S}$ is not relatively ample even when $X\to S$ is semi-stable, because of the possible 
presence of vertical $(-2)$-curves (\ie vertical curves $C$ in $X$ such that $\omega_{X/S}\cdot C=0$).
Let $f': X'\rightarrow S$ be the canonical model of $X$ 
obtained by contracting all vertical $(-2)$-curves (\cite{LQ}, \S 9.4.3). Then 
the relative dualizing sheaf $\omega_{X'/S}$ is a relatively ample invertible 
sheaf on $X'$ (op. cit., \S 9.4.20).

\begin{Question} For which $n\in \mathbb{N}$ is $\omega_{X'/S}^{\otimes n}$ relatively very ample ? 
\end{Question} 

When $f$ is smooth, then $X=X'$ and it is well known that $\omega_{X'/S}^{\otimes n}$ is relatively very ample for all $n\ge 3$. More generally, the same result holds if $f$ is semi-stable (hence $X'\to S$ is stable), see \cite{DM}, Corollary of Theorem 1.2. 
In this paper we show that the same bound holds in the general case under the mild assumption that the
residue fields of $S$ at closed points are perfect (we do not known whether this condition can
be removed). Notice that this condition is satisfied if $S$ is the spectrum of the ring of integers
of a number field $K$, or if $S$ is a  regular, integral quasi-projective curve over a perfect field. 

{\begin{Thm}[{\bf Main Theorem}]\label{main}
Let $S$ be a Dedekind scheme with perfect residue fields at closed points, let $f: X\rightarrow S$ be a minimal regular arithmetic surface of fibre genus at least $2$, and let $f': X'\rightarrow S$ be the canonical model of $f$. Then
$\omega_{X'/S}^{\otimes n}$ is relatively very ample for all $n\geq 3$. 
\end{Thm}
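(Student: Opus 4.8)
The plan is to reduce the relative statement to a problem on the closed fibres and then to a cohomological vanishing on Gorenstein curves. Relative very ampleness is local on $S$ and is preserved and reflected by faithfully flat base change, so I would first dispose of the generic fibre $X'_\eta = X_\eta$, a smooth projective curve of genus $\ge 2$, for which $\omega^{\otimes n}$ is very ample for $n\ge 3$ by the classical bound $n(2g-2)\ge 2g+1$. The content therefore lies at the closed points $s\in S$, where by hypothesis $k(s)$ is perfect. Here I would invoke the cohomology-and-base-change criterion (EGA III): if $R^1 f'_*\,\omega_{X'/S}^{\otimes n}=0$ and $f'_*\,\omega_{X'/S}^{\otimes n}$ is locally free with formation commuting with base change, and if $\omega_{X'/S}^{\otimes n}$ restricts to a very ample sheaf on every fibre, then it is relatively very ample. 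Since $f'$ is flat with Gorenstein fibres and $\omega_{X'/S}$ is invertible, $\omega_{X'/S}^{\otimes n}|_{X'_s}=\omega_C^{\otimes n}$, where $C:=X'_s$ is a projective Gorenstein curve over $k(s)$ with $\omega_C$ ample (as $\omega_{X'/S}$ is) and $p_a(C)\ge 2$. Everything then reduces to the following fibre statement: \emph{for a projective Gorenstein curve $C$ over a perfect field $k$ with $\omega_C$ ample and $p_a(C)\ge 2$, the sheaf $\omega_C^{\otimes n}$ is very ample and $H^1(C,\omega_C^{\otimes n})=0$ for $n\ge 3$} (the vanishing holding already for $n\ge 2$, which supplies the cohomological flatness above).

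Because $k$ is perfect, $C\times_k\bar k$ is again a Gorenstein curve with ample dualizing sheaf, and both very ampleness and the $H^1$-vanishing descend from $\bar k$, so I may assume $k=\bar k$. By Serre duality on the Gorenstein curve $C$ one has $H^1(C,\omega_C^{\otimes n})\cong H^0(C,\omega_C^{\otimes(1-n)})^\vee$ and, for a subscheme $Z\subset C$ of length $\le 2$ with ideal sheaf $\cI_Z$, $H^1(C,\omega_C^{\otimes n}\otimes\cI_Z)\cong H^0(C,\omega_C^{\otimes(1-n)}(Z))^\vee$. Granting global generation, $\omega_C^{\otimes n}$ is very ample once the restriction $H^0(C,\omega_C^{\otimes n})\to H^0(Z,\omega_C^{\otimes n}|_Z)$ is surjective for every such $Z$, \ie once $H^0(C,\omega_C^{\otimes(1-n)}(Z))=0$ for all $Z$ of length $\le 2$ (length $1$ giving base-point-freeness, length $2$ giving separation of points and of tangent vectors).

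To kill these groups I would use the principle that a section of a line bundle $M$ on a curve vanishes identically on every subcurve $C'\subseteq C$ with $\deg_{C'} M<0$; thus it suffices to show $\deg_{C'}\bigl(\omega_C^{\otimes(1-n)}(Z)\bigr)<0$ for every subcurve, or to treat directly the boundary subcurves where equality occurs. Ampleness of $\omega_C$ gives $\omega_C\cdot C'\ge 1$ for every subcurve, so $\deg_{C'}\bigl(\omega_C^{\otimes(1-n)}(Z)\bigr)\le (1-n)+\deg_{C'}Z\le (1-n)+2$, which is strictly negative as soon as $n\ge 4$. For $n=3$ it is $\le 0$, with equality precisely when $\omega_C\cdot C'=1$ and all of $Z$ lies on $C'$; ruling out a section supported on such a minimal-degree subcurve carrying the whole of $Z$ is exactly what forces the constant $3$ to be sharp and is, I expect, the main obstacle. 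I would resolve it by propagating the putative section into the complementary subcurve — where the degree is strictly negative — using the connectedness of $C$ and the Gorenstein adjunction relations, and, when $C$ is reducible or non-reduced, by a dévissage along the irreducible components and the nilradical.

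It is here that working on the minimal regular model $X$ becomes convenient. Through the contraction $\pi\colon X\to X'$ of the $(-2)$-curves one has $\pi^*\omega_{X'/S}=\omega_{X/S}$ and $\pi_*\cO_X=\cO_{X'}$, so the pluricanonical sections and the relevant intersection numbers can be computed on $X$, where the fibre $X_s$ is an effective divisor with known components and multiplicities and the contracted configurations are precisely the $\omega_{X/S}$-trivial ones; this makes the finitely many borderline subcurves explicit and their degrees computable. The perfectness of $k(s)$ enters precisely to guarantee that this fibre geometry is unchanged after base change to $\bar{k(s)}$, so that the subcurve analysis carried out over the algebraic closure descends, and this is also the natural point at which the hypothesis seems hard to remove. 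The combination of the borderline vanishing for $n=3$ and the bookkeeping for non-reduced fibres is what I expect to constitute the bulk of the proof.
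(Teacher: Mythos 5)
Your opening reduction (localize on $S$, pass to the closed fibre via $R^1$-vanishing and cohomology-and-base-change, use perfectness of $k(s)$ to get to an algebraically closed residue field) matches the paper's. The gaps are in the fibre-wise statement. First, your duality step $H^1(C,\omega_C^{\otimes n}\otimes\cI_Z)\cong H^0(C,\omega_C^{\otimes(1-n)}(Z))^\vee$ is only valid when $Z$ is a Cartier divisor on $C$; when $Z$ meets the singular locus of $C=X'_s$ --- and the delicate points are exactly the images of the contracted $(-2)$-configurations, which are singular points of the fibre --- the dual group is $\mathrm{Hom}_{\cO_C}(\cI_Z\otimes\omega_C^{\otimes n},\omega_C)$, which is strictly larger than $H^0$ of a twist by $-Z$. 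Taming this, together with the d\'evissage along components and the nilradical that you defer, is essentially the content of the embedding criterion of Catanese--Franciosi--Hulek--Reid, which the paper cites as Theorem~\ref{CR} rather than reproves; your sketch would have to reconstruct it.

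Second, and more seriously, your numerical input is only $\omega_C\cdot C'\ge 1$ from ampleness, and that is not enough. Whether one quotes Theorem~\ref{CR} or redoes the propagation argument by hand, the inequality that must be verified is of the form $n\,\omega_C\cdot B\ge 2p_a(B)+1$ for \emph{every} subcurve $B\subseteq C$, and $p_a(B)$ can a priori be large compared with $\omega_C\cdot B$. Controlling it amounts to an adjunction formula $2p_a(B)-2=(\omega+B)\cdot B$ together with $B^2\le 0$; but $B$ is only a Weil divisor on the \emph{singular} surface $X'$, where there is no adjunction formula and no intersection form. Overcoming exactly this is the technical heart of the paper (Section~\ref{section genus}: first replace the subcurve by an effective Weil divisor without decreasing $p_a$, then use that $X'$ has rational singularities and Lipman's theorem to produce an exceptional divisor $D$ on the regular model $X$ with $p_a(B)=p_a(\widetilde{B}+D)$, where adjunction and the parity argument apply). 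Your proposal is silent on this point, and the borderline case you isolate ($n=3$, $\omega_C\cdot C'=1$, $Z\subset C'$) is not where the real difficulty lies. The same omission affects your claimed $H^1(C,\omega_C^{\otimes n})=0$ for $n\ge 2$, which the paper obtains from Lee's lemma (Lemma~\ref{special}) by an intersection-theoretic computation on the regular model, again using minimality and the evenness of $\omega\cdot A+A^2$.
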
}

\begin{Rem}\label{rmk} Under the above hypothesis and when $S$ is affine, 
we also give a new proof of a theorem of J. Lee (\cite{L}, Theorem 7):  
\par\medskip
 
$\omega_{X/S}^{\otimes n}$ 
(equivalently, $\omega_{X'/S}^{\otimes n}$) is globally generated for all $n\geq 2$. 
\par\medskip
\noindent The proof in \cite{L} is based on a detailed case-by-case analysis.
Our method is more synthetic. 
Moreover, we do not 
assume that the generic fibre of $f$ is geometrically irreducible, see
Proposition~\ref{Lee}. However, as a counterpart, we need the assumption on the
perfectness of the residue fields.
\end{Rem}

The proof of \ref{main} relies on the following criterion due to F. Catanese, M. Franciosi, K. Hulek and M. Reid.

\begin{Thm}[\cite{CFHR}, Thm. 1.1]\label{CR} Let $C$ be a
    Cohen-Macaulay curve over an algebraically closed field $k$,  let
    $H$ be an invertible sheaf on $C$. 
Then \begin{enumerate}
\item $H$ is globally generated if for every generically Gorenstein subcurve 
$B\subseteq C$, we have $H\cdot B\geq 2p_a(B)$;
\item $H$ is very ample if for every generically Gorenstein subcurve $B\subseteq C$, 
we have $H\cdot B\geq 2p_a(B)+1$. 
\end{enumerate}
\end{Thm}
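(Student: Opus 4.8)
The plan is to deduce both parts from a single cohomological vanishing. For global generation, fix a closed point $x$ with ideal sheaf $\mathcal{I}_x$ and use $0\to H\otimes\mathcal{I}_x\to H\to H\otimes k(x)\to 0$: the evaluation $H^0(C,H)\to H\otimes k(x)$ is onto once $H^1(C,H\otimes\mathcal{I}_x)=0$. For very ampleness, recall that over an algebraically closed field $H$ is very ample exactly when $H^0(C,H)\to H^0(Z,H\otimes\mathcal{O}_Z)$ is onto for every length-two subscheme $Z$ (this simultaneously separates pairs of points and tangent directions, including the non-curvilinear length-two schemes at singular points), and this again follows from $H^1(C,H\otimes\mathcal{I}_Z)=0$. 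So it suffices to prove $H^1(C,H\otimes\mathcal{I}_Z)=0$ for every subscheme $Z$ of length $\ell$, with $\ell=1$ in case (1) and $\ell=2$ in case (2).

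Since $C$ is Cohen--Macaulay it has a dualizing sheaf $\omega_C$, and Serre duality rewrites $H^1(C,H\otimes\mathcal{I}_Z)$ as the dual of $\operatorname{Hom}_{\mathcal{O}_C}(H\otimes\mathcal{I}_Z,\omega_C)$; thus I want to rule out a nonzero homomorphism $s\colon H\otimes\mathcal{I}_Z\to\omega_C$. The engine is a reduction lemma (the ``Ramanujam-type'' step): a nonzero $s$ produces a subcurve $B\subseteq C$ on whose every component $s$ is generically nonzero, and one arranges $B$ to be generically Gorenstein---this is exactly why the hypotheses are tested only on generically Gorenstein subcurves---together with a nonzero map $(H\otimes\mathcal{I}_Z)|_B\to\omega_B$ to the dualizing sheaf of $B$ \emph{itself}. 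Because such a map of rank-one torsion-free sheaves is generically injective, comparing degrees on $B$ gives $\deg_B(H\otimes\mathcal{I}_Z)\le\deg\omega_B=2p_a(B)-2$. Writing $\ell_B\le\ell$ for the length of $Z$ along $B$, so that $\deg_B(H\otimes\mathcal{I}_Z)=H\cdot B-\ell_B$, this reads $H\cdot B\le 2p_a(B)-2+\ell$. For $\ell=1$ this contradicts $H\cdot B\ge 2p_a(B)$, and for $\ell=2$ it contradicts $H\cdot B\ge 2p_a(B)+1$; in either case $s=0$, the vanishing holds, and we are done.

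The main obstacle is the reduction lemma, i.e. passing from a nonzero $s$ on $C$ to a nonzero map into $\omega_B$ for a \emph{generically Gorenstein} subcurve $B$, with the dualizing sheaf of $B$ rather than the restriction $\omega_C|_B$. Two difficulties must be handled. First, at non-Gorenstein points of $C$ the ideal $\mathcal{I}_Z$ is not invertible, so $\operatorname{Hom}(H\otimes\mathcal{I}_Z,\omega_C)$ cannot be computed by naive twisting and must be analyzed through local duality; this is what forces the passage to generically Gorenstein subcurves, where $\omega_B$ is a line bundle in codimension zero and the degree inequality $\deg_B(H\otimes\mathcal{I}_Z)\le 2p_a(B)-2$ is legitimate. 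Second, replacing $\omega_C|_B$ by $\omega_B$ means absorbing the connection term in the adjunction formula $\deg(\omega_C|_B)=2p_a(B)-2+B\cdot(C-B)$; controlling it requires an induction on the number of irreducible components in which $B$ is chosen maximal (or minimal) for the relevant numerical-connectedness property, so that at each step the contribution of $B\cdot(C-B)$ is offset and the bound closes. Keeping track of $\ell_B$ on each piece throughout this induction is the technical heart, and the numerical thresholds $2p_a(B)$ and $2p_a(B)+1$ are calibrated precisely so that every inductive step goes through.
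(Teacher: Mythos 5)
First, a point of order: the paper does not prove this statement at all --- it is imported verbatim from \cite{CFHR}, Theorem 1.1, and used as a black box on the closed fibres of $f'$. So the only meaningful comparison is with the original Catanese--Franciosi--Hulek--Reid proof, and your outline does follow its actual route: reduce both parts to the surjectivity of $H^0(C,H)\to H^0(Z, H\otimes\mathcal{O}_Z)$ for clusters $Z$ of length $\ell\le 2$, dualize (Serre duality on a projective Cohen--Macaulay curve holds for arbitrary coherent sheaves, so $H^1(C,H\otimes\mathcal{I}_Z)^\vee\cong \operatorname{Hom}_{\mathcal{O}_C}(H\otimes\mathcal{I}_Z,\omega_C)$), pass from a nonzero $s$ to a generically Gorenstein subcurve $B$ carrying a generically injective map into $\omega_B$, and conclude from $H\cdot B-\ell_B\le \deg\omega_B=2p_a(B)-2$; your numerics at the end are exactly right, as is your observation that the generically Gorenstein hypothesis enters because $\omega_B$ must have rank one at generic points for the degree count.

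The genuine gap is in the step you yourself flag as the technical heart, and the two mechanisms you sketch for it are, respectively, false and superfluous. False: on a non-reduced curve it is \emph{not} true that a nonzero map of rank-one torsion-free sheaves that is generically nonzero on each component is generically injective --- at a generic point $\eta$ of a multiple component, $\mathcal{O}_{B,\eta}$ is a non-reduced Artinian ring, $(H\otimes\mathcal{I}_Z)_\eta$ is free of rank one, and a map such as multiplication by a nilpotent (e.g. $1\mapsto \epsilon$ in $k(t)[\epsilon]/(\epsilon^2)$) is nonzero but not injective, which destroys your degree inequality. The fix in \cite{CFHR} is to shrink $B$ to the subcurve cut out by the annihilator of the image of $s$: then $1\mapsto s(1)$ gives an injection $\mathcal{O}_{B,\eta}\hookrightarrow\omega_{B,\eta}$ at each generic point, and since the canonical module of an Artinian local ring has the same length as the ring, this injection is an isomorphism --- which simultaneously proves $B$ is generically Gorenstein (you get this for free rather than having to ``arrange'' it) and gives generic injectivity. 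Superfluous: your proposed induction on components controlling the adjunction term $B\cdot(C-B)$ via numerical connectedness is not how the passage from $\omega_C|_B$ to $\omega_B$ works, and it is unlikely to close since the theorem has no connectedness hypothesis. The correct mechanism is direct: $\omega_C$ is torsion-free because $C$ is Cohen--Macaulay, the subsheaf $s(\mathcal{I}_B\cdot(H\otimes\mathcal{I}_Z))\subseteq\omega_C$ is generically zero on all of $C$ (zero on $B$ because of the factor $\mathcal{I}_B$, zero on the complementary curve because $s$ vanishes there generically), hence zero; so $s$ factors through the restriction to $B$ with image in $\mathcal{H}om_{\mathcal{O}_C}(\mathcal{O}_B,\omega_C)=\omega_B\subseteq\omega_C$, and no connection term or induction ever appears. (A minor slip: over an algebraically closed field every length-two subscheme is curvilinear --- its local ring is $k[\epsilon]/(\epsilon^2)$ or $k\times k$ --- so there are no ``non-curvilinear length-two schemes''; non-curvilinear clusters first occur in length three, which is why degree-two separation plus properness already yields a closed immersion.) As it stands, then, your proposal correctly reconstructs the skeleton of the published proof but leaves its decisive lemma unproved, with a sketch that would fail as written.
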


Recall that the arithmetic genus is defined by $p_a(B):=1-\chi_k(\cO_B)$ and that 
$B$ is called \emph{generically Gorenstein} if its dualizing sheaf is generically invertible. 
We will apply the above theorem to the closed fibres of $f'$. The key point 
is Corollary~\ref{genus calculate} which allows to express 
the arithmetic genus of an effective Weil divisor $B$ on $X'$ as the arithmetic 
genus of some effective Cartier divisor on $X$, to overcome the absence of 
the adjunction formula. 

When $S$ is of equal-characteristic (\ie $\cO_S$ contains a field),  we provide
in Section \ref{proof 2} a simpler proof of the main theorem based on I. Reider's method. 

\

{\bf Acknowledgements} The author would like to thank Prof. Qing Liu, who 
suggested this problem to him and helped a lot to improve the presentation of 
this paper. The author would also like to thank Prof. Jinxing Cai and Jilong Tong for helpful suggestions and encouragements. 
\end{section}

\begin{section}{Arithmetic genus of Weil divisors}\label{section genus}

In this section we show how to compute the arithmetic genus of an effective Weil divisor on a normal surface with rational singularities. The main result is Corollary~\ref{genus calculate}. 

Let $Y$ be a noetherian normal scheme.  
We first associate to each Weil divisor $D\in Z^1(Y)$ a coherent sheaf $\cO_Y(D)\subseteq \mathcal{K}$, where $\mathcal{K}$ is the constant sheaf $K(Y)$ of rational functions on $Y$. 

\begin{Def} 
Let $D\in Z^1(Y)$. For any point $\xi\in Y$ of codimension $1$, denote by $v_\xi$ the 
normalised discrete valuation on $K(Y)$ induced by $\xi$ and by $v_\xi(D)\in\mathbb Z$ the multiplicity of $D$
at $\overline{\{\xi\}}$. We define the sheaf $\cO_Y(D)$ 
by $$\cO_Y(D)(U)=\{x\in K(Y) \mid v_\xi(x)+v_\xi(D)\geq 0, \quad \forall  \xi\in U, \ \mathrm{codim}(\xi,Y)=1\}.$$
(By convention, $v_\xi(0)=+\infty$).
\end{Def}

\begin{Rem}\label{Id} 
\begin{enumerate}
\item Any $\cO_Y(D)$ is a reflexive sheaf by \cite{H}, Prop.1.6 since it is a normal sheaf. 
\item For any effective Weil divisor $0<D\in Z^1(Y)$, $\cO_Y(-D)$,  also denoted by $\cI_D$, is an ideal sheaf. In this case, we  consider $D$ as \emph{the subscheme of $Y$ defined $\cI_D$}. 
\item A purely codimension one closed subscheme $Z$ of $Y$ is an effective Weil divisor if and only if it has no embedded points. 
\end{enumerate}
\end{Rem}

\begin{Lem}\label{saturated} Suppose $D$ is an effective Weil divisor on $Y$. 
Let $Y^0$ be an open subset of $Y$ with complement of codimension at least $2$, and let $Z$ be any closed subscheme of $Y$ such that  $D\cap Y^0=Z\cap Y^0$. 
Then $\cI_Z\subseteq \cI_D$. 
\end{Lem}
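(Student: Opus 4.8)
The plan is to verify the inclusion $\cI_Z\subseteq\cI_D$ section by section, reducing everything to the codimension-one points of $Y$, where the valuation-theoretic description of $\cI_D=\cO_Y(-D)$ applies. First I would record the two facts that make this possible. Since $Y$ is integral and normal, for every open $U$ one has $\cO_Y(U)=\{x\in K(Y)\mid v_\xi(x)\geq 0\text{ for all codimension-one }\xi\in U\}$, so that every section of a subsheaf of $\cO_Y$ is an element of $K(Y)$ whose behaviour is controlled entirely by the discrete valuations $v_\xi$. Secondly, because the complement of $Y^0$ has codimension at least $2$ in $Y$, no codimension-one point of $Y$ lies outside $Y^0$; hence for every open $U$ the codimension-one points of $U$ coincide with those of $U\cap Y^0$.

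Next I would exploit the hypothesis $D\cap Y^0=Z\cap Y^0$. This is an equality of closed subschemes of $Y^0$, so the two ideal sheaves agree after restriction, $\cI_Z|_{Y^0}=\cI_D|_{Y^0}$ inside $\cO_{Y^0}$. Now take any open $U\subseteq Y$ and any section $s\in\cI_Z(U)\subseteq\cO_Y(U)\subseteq K(Y)$. Restricting to $U\cap Y^0$ places $s$ in $\cI_Z(U\cap Y^0)=\cI_D(U\cap Y^0)$, and by the definition of $\cO_Y(-D)$ this means $v_\xi(s)\geq v_\xi(D)$ for every codimension-one point $\xi\in U\cap Y^0$. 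By the second observation above these are exactly the codimension-one points of $U$, so $s$ satisfies the defining condition of $\cI_D(U)$ and therefore $s\in\cI_D(U)$. As $U$ and $s$ were arbitrary, this gives $\cI_Z\subseteq\cI_D$.

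Alternatively, and more conceptually, one can phrase this through reflexivity: writing $j\colon Y^0\hookrightarrow Y$ for the open immersion, normality gives $\cO_Y\cong j_*\cO_{Y^0}$, while Remark~\ref{Id}(1) asserts that $\cI_D=\cO_Y(-D)$ is reflexive, whence $\cI_D\cong j_*(\cI_D|_{Y^0})$. The canonical map $\cI_Z\to j_*(\cI_Z|_{Y^0})$ is injective because $\cI_Z\hookrightarrow\cO_Y\cong j_*\cO_{Y^0}$, and composing with the identification $\cI_Z|_{Y^0}=\cI_D|_{Y^0}$ yields $\cI_Z\hookrightarrow j_*(\cI_D|_{Y^0})=\cI_D$.

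The computations are all routine; the only point requiring care is the compatibility between the abstract ideal sheaf $\cI_Z$ of an arbitrary closed subscheme and the valuation-theoretic sheaf $\cI_D=\cO_Y(-D)$. The bridge is the fact that both are subsheaves of the constant sheaf $\mathcal{K}=K(Y)$ and that, by normality, a section is determined by and constrained only through the codimension-one valuations. Once this is isolated, the hypothesis on the codimension of $Y\setminus Y^0$ does the rest, since it guarantees that passing to $Y^0$ loses no codimension-one information.
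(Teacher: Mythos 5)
Your proposal is correct, and your ``alternative'' third paragraph is precisely the paper's own proof: the canonical injection $\cI_Z\hookrightarrow j_*j^*\cI_Z$, the identification $j^*\cI_Z=j^*\cI_D$ coming from $D\cap Y^0=Z\cap Y^0$, and the equality $j_*j^*\cI_D=\cI_D$ supplied by the reflexivity (normality) of $\cO_Y(-D)$ via \cite{H}, Prop.~1.6. Your first, valuation-theoretic argument is a correct elementary unwinding of the same fact --- it replaces the citation by the explicit observation that $\cI_D(U)$ is cut out by conditions at codimension-one points, all of which lie in $Y^0$ --- so the two routes differ only in whether the key saturation property of $\cO_Y(-D)$ is quoted or reproved by hand.
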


\begin{proof}
Let $j: Y^0\rightarrow Y$ be the canonical morphism.  We have $j^*\cI_Z=j^*\cI_D$ by  hypothesis. Now the canonical injective morphism  $\cI_Z\hookrightarrow j_*j^*\cI_Z=j_*j^*\cI_D$ implies 
$\cI_Z\subseteq \cI_D$ because $j_*j^*\cI_D=\cI_D$ (\cite{H}, Prop.1.6). 
\end{proof}

From now on we focus on the case of surfaces. More 
precisely, we suppose $Y=\mathrm{Spec}(R)$ is an affine noetherian normal surface (\ie of dimension $2$), with a unique singularity $y\in Y$. Suppose that $Y$ admits a desingularization $g:T\rightarrow Y$ (\ie $T$ is  regular, $g$ is proper birational and $g:T\setminus g^{-1}(y)\rightarrow Y\setminus \{y\}$ is an isomorphism). Let $\{E_i\}_{i=1,...,m}$ be the set of prime divisors contained in $g^{-1}(y)$. We call an \emph{effective} divisor $Z$ on $X$ \emph{exceptional} if $g(Z)=\{y\}$ as sets, \ie $Z=\sum r_iE_i\geq 0$.

\begin{Def} For any exceptional divisor $Z$ and any coherent sheaf $\mathcal{F}$ on $Z$, we define 
the  characteristic of $\mathcal{F}$: 
$$\chi(\mathcal{F}):=\sum \limits_{i\ge 0}  (-1)^i \Len_{R} H^i(Z,\mathcal{F}).$$ 
We call $p_a(Z):=1-\chi(\cO_Z)$ the \emph{arithmetic genus of $Z$}.
\end{Def}

This definition makes sense since $H^i(Z,\mathcal{F})$ is a finitely generated $R$-module supported on $y$, hence of finite length, and $H^i(Z,\mathcal{F})=0$ as soon as $i\geq 2$  since $\dim Z\le 1$. 
\begin{Rem}
In case $T$ is an open  subscheme of an arithmetic surface, and $Z$ is supported in a special fibre, this definition coincides with \cite{LQ}, page 431, Equality (4.12). 
\end{Rem}
\medskip

We recall the definition of rational singularities for surfaces due to M. Artin : 

\begin{Prop}[\cite{Ar}, see also \cite{LQ}, \S 9.4]\label{rational}
The following conditions are equivalent:
\begin{enumerate}
\item for any exceptional divisor $Z$, we have $H^1(Z,\cO_Z)=0$;
\item for any $Z$ as above, $p_a(Z)\leq 0$;
\item $R^1g_*(\cO_T)=0$.
\end{enumerate}
When any of the above condition holds, $y$ is called a \emph{rational singularity}. 
\end{Prop}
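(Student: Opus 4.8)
The plan is to establish the three implications $(1)\Rightarrow(2)$, $(1)\Leftrightarrow(3)$, and $(2)\Rightarrow(1)$, the last being the substantial one. The basic tool throughout is the exact sequence attached to a decomposition $Z=Z'+E_i$ of an exceptional divisor into an effective $Z'\ge 0$ and a prime component $E_i$, namely
\[
0\to \cO_{E_i}(-Z')\to \cO_Z\to \cO_{Z'}\to 0,\qquad \cO_{E_i}(-Z'):=\cO_T(-Z')\otimes\cO_{E_i},
\]
together with Riemann--Roch on the proper curve $E_i$ to compute $\chi(\cO_{E_i}(-Z'))=\chi(\cO_{E_i})-(Z'\cdot E_i)$. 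Taking $\chi$ of the sequence yields the adjunction recursion
\[
p_a(Z)=p_a(Z')+p_a(E_i)+(Z'\cdot E_i)-1 ,
\]
which will drive both the numerical bookkeeping and the induction. The implication $(1)\Rightarrow(2)$ is then immediate: if $H^1(Z,\cO_Z)=0$ then $\chi(\cO_Z)=\Len_R H^0(Z,\cO_Z)\ge 1$, since the unit section shows $H^0(Z,\cO_Z)\ne 0$, whence $p_a(Z)=1-\chi(\cO_Z)\le 0$.

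For $(1)\Leftrightarrow(3)$ I would invoke the theorem on formal functions. Fixing one exceptional $D>0$ with $\cO_T(-D)\subseteq \mathfrak m_y\cO_T$ makes the subschemes $\{nD\}_{n\ge 1}$ cofinal among the infinitesimal neighbourhoods of $g^{-1}(y)$, so $(R^1g_*\cO_T)^{\wedge}\cong \varprojlim_n H^1(nD,\cO_{nD})$. Because $\dim nD\le 1$, the transition maps of this system are surjective (their cokernels embed in the $H^2$ of a sheaf on a curve, which vanishes), so the limit vanishes if and only if every $H^1(nD,\cO_{nD})$ does. As $R^1g_*\cO_T$ is a finite-length $R$-module it equals its own $\mathfrak m_y$-adic completion, which gives $(1)\Rightarrow(3)$; conversely any exceptional $Z$ is dominated by some $nD$, and the surjection $\cO_{nD}\twoheadrightarrow\cO_Z$ exhibits $H^1(Z,\cO_Z)$ as a quotient of $H^1(nD,\cO_{nD})=0$, giving $(3)\Rightarrow(1)$.

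The heart of the matter is $(2)\Rightarrow(1)$, and this is where I expect the main difficulty. Following Artin, the idea is to prove $H^1(Z,\cO_Z)=0$ for every exceptional $Z>0$ by building $Z$ up one prime component at a time along a carefully chosen computation sequence $0=Z_0<Z_1<\cdots<Z_\ell=Z$ and feeding the exact sequence above into an induction. At the $j$-th step $H^1(\cO_{Z_j})$ is squeezed between $H^1(\cO_{E_{(j)}}(-Z_{j-1}))$ and $H^1(\cO_{Z_{j-1}})$, so the task is to arrange, using only the numerical hypothesis $p_a(W)\le 0$ for all $0<W\le Z$, that each curve term $H^1(\cO_{E_{(j)}}(-Z_{j-1}))$ vanishes. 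Concretely I would introduce the fundamental cycle $Z_f$ (the minimal $Z>0$ with $Z\cdot E_i\le 0$ for all $i$) via Laufer's construction, use the recursion to show $p_a(Z_f)\ge 0$ in general and that $(2)$ forces $p_a(Z_f)=0$, and then propagate the vanishing of $H^1$ outward from $Z_f$ to an arbitrary $Z$.

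The genuinely delicate points, which I would treat with care, are the arithmetic features of the present setting. Since each $E_i$ is an effective Cartier divisor on the regular surface $T$ it is a local complete intersection, hence Gorenstein, so Serre duality and the vanishing criterion $\deg L>2p_a(E_i)-2\Rightarrow H^1(E_i,L)=0$ remain available even though $E_i$ may be singular and need not be a smooth rational curve. The main obstacle, however, is that the residue field $\kappa(y)$ is in general neither algebraically closed nor the ground field in the usual geometric sense, so every Euler characteristic, degree, and length must be read as $\Len_R$ rather than as a dimension over a field; keeping the recursion and the vanishing criterion consistent under this length formalism, and verifying that the fundamental cycle argument survives it, is the step I expect to require the most attention.
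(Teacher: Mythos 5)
The paper contains no proof of this Proposition: it is quoted as known, with a pointer to Artin and to \cite{LQ}, \S 9.4 (in fact the relevant reference is \cite{Ar1}, ``On isolated rational singularities of surfaces'', not the approximation paper \cite{Ar} cited in the statement). So there is nothing internal to compare you against, and I assess the sketch on its own terms. The directions $(1)\Rightarrow(2)$ and $(1)\Leftrightarrow(3)$ are correct as written: the unit section gives $\chi(\cO_Z)=\Len_R H^0\geq 1$; and the formal-function argument, with the Mittag--Leffler observation that a surjective inverse system with vanishing limit has vanishing terms and the fact that the finite-length module $R^1g_*\cO_T$ equals its completion, is exactly the standard proof (you do need cofinality of $\{nD\}$ in both directions among the infinitesimal neighbourhoods of $g^{-1}(y)$, which is routine).

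For $(2)\Rightarrow(1)$ your frame is right but the decisive mechanism --- how to choose the component $E_{(j)}$ so that the curve term vanishes --- is precisely what you leave unspecified, and the detour through the fundamental cycle points at a strictly harder theorem than you need. With the full strength of $(2)$ the induction closes directly. Iterating your own recursion over $Z=\sum r_iE_i$ gives $\sum_i r_i\bigl((Z-E_i)\cdot E_i+2p_a(E_i)-2\bigr)=2p_a(Z)-2\leq -2<0$, so some component $E_i$ of $Z$ satisfies $(Z-E_i)\cdot E_i<2-2p_a(E_i)$, i.e. $\deg\bigl(\cO_T(-(Z-E_i))|_{E_i}\bigr)>2p_a(E_i)-2=\deg\omega_{E_i}$; since $E_i$ is an integral Gorenstein curve, duality kills $H^1(E_i,\cO_T(-(Z-E_i))|_{E_i})$, and your exact sequence plus the inductive hypothesis $H^1(\cO_{Z-E_i})=0$ finishes the step. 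No fundamental cycle, no Laufer computation sequence, and no claim ``$p_a(Z_f)\geq 0$ in general'' is needed; propagating $H^1=0$ outward from $Z_f$ using only $p_a(Z_f)=0$ is the genuinely delicate content of Artin's Theorem 3, and routing the present, easier equivalence through it would leave you proving more than the statement asks. Finally, the base case over a non-algebraically-closed residue field, which you rightly flag, is settled by an observation you omit: for $Z=E_i$ integral, $k':=H^0(E_i,\cO_{E_i})$ is a finite field extension of $k(y)$ and $H^1(E_i,\cO_{E_i})$ is a $k'$-vector space, so $\Len_R H^1\leq \Len_R H^0-1=[k':k(y)]-1$ forces $H^1=0$ by divisibility. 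With these two points supplied, your outline becomes a complete proof along the classical lines.
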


\begin{Rem}
It is well known that the canonical model $X'$ in Section 1 has only rational singularities (\cite{LQ}, Corollary 9.4.7).  
\end{Rem}

From now on we assume \emph{$y$ is a rational singularity.} 
We need an important lemma. 

\begin{Lem}[\cite{Lip}, Theorem 12.1]\label{global generated}
For any $\cL\in \mathrm{Pic}(T)$ such that $\deg (\cL|_{E_i})\geq 0$
for all $i\le m$, 
 $\cL$ is globally generated and $H^1(T,\cL)=0$. \qed
\end{Lem}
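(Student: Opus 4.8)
The plan is to deduce both assertions from a single cohomological vanishing on the infinitesimal neighbourhoods of the fibre. Since $Y=\Spec(R)$ is affine and $g$ is proper, the Leray spectral sequence gives $H^1(T,\cL)=\Gamma(Y,R^1g_*\cL)$, and $R^1g_*\cL$ is coherent and supported at $y$, hence a finite length $R$-module equal to its $\mathfrak m$-adic completion ($\mathfrak m=\mathfrak m_y$). By the theorem on formal functions this completion is $\varprojlim_n H^1(Z_n,\cL|_{Z_n})$, where $Z_n$ is the closed subscheme of $T$ defined by $\mathfrak m^n\cO_T$. Setting $D_0:=\sum_iE_i$, one checks $\cO_T(-ND_0)\subseteq\mathfrak m\cO_T\subseteq\cO_T(-D_0)$ for some $N$ (the middle ideal has radical $\cO_T(-D_0)$), so the ideals $\mathfrak m^n\cO_T$ are cofinal with the powers $\cO_T(-nD_0)$ and the two inverse limits agree. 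Thus it suffices to prove:
\medskip

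\noindent\emph{For every exceptional divisor $Z\ge 0$ and every $\cL$ with $\deg(\cL|_{E_i})\ge 0$ for all $i$, one has $H^1(Z,\cL|_Z)=0$.}
\medskip

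\noindent Granting this, each $H^1(mD_0,\cL|_{mD_0})=0$, so $R^1g_*\cL=0$ and hence $H^1(T,\cL)=0$.

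I would prove the displayed statement by induction on the multiplicity $\sum_ir_i$ of $Z=\sum_ir_iE_i$. The point where rationality enters is the choice of a component to peel off: by Proposition~\ref{rational} we have $p_a(Z)\le 0$, so adjunction on the regular surface $T$ gives $(\omega_T\otimes\cO_T(Z))\cdot Z=2p_a(Z)-2<0$, and therefore some component $E:=E_i$ with $r_i>0$ satisfies $(\omega_T\otimes\cO_T(Z))\cdot E<0$. Writing $Z=Z'+E$ with $Z'\ge 0$, the exact sequence $0\to\cL\otimes\cO_T(-Z')|_E\to\cL|_Z\to\cL|_{Z'}\to 0$ reduces the vanishing to $H^1(Z',\cL|_{Z'})=0$, known by induction, together with $H^1(E,\cL\otimes\cO_T(-Z')|_E)=0$. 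For the latter, $E$ is a prime divisor on the regular surface $T$, hence a Gorenstein curve with $\deg\omega_E=2p_a(E)-2$; adjunction and the choice of $E$ give $\deg(\cL\otimes\cO_T(-Z')|_E)=2p_a(E)-2+\cL\cdot E-(\omega_T\otimes\cO_T(Z))\cdot E\ge 2p_a(E)-1$, so by Serre duality $H^1(E,\cdot)^\vee=H^0(E,\omega_E\otimes(\cdot)^{-1})$, a sheaf of negative degree on the integral curve $E$, which therefore vanishes.

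For global generation I would first note that $g$ is an isomorphism away from $y$, so the counit $g^*g_*\cL\to\cL$ is already an isomorphism off $g^{-1}(y)$, and it remains to generate $\cL$ at the closed points $t\in g^{-1}(y)$. Choosing $Z$ to be a large multiple of the fundamental cycle, so that $Z\cdot E_i\le 0$ for all $i$ and $\mathrm{Supp}(Z)=g^{-1}(y)$, the bundle $\cL\otimes\cO_T(-Z)$ again has nonnegative degrees on every $E_i$; hence $H^1(T,\cL\otimes\cO_T(-Z))=0$ by the first part, and the restriction $H^0(T,\cL)\to H^0(Z,\cL|_Z)$ is surjective. It then suffices to show that $\cL|_Z$ is globally generated on $Z$, which I would again attempt by the peeling induction: the quotient $\cL|_{Z'}$ is globally generated by induction, the surjectivity $H^0(Z,\cL|_Z)\twoheadrightarrow H^0(Z',\cL|_{Z'})$ comes from $H^1(E,\cL\otimes\cO_T(-Z')|_E)=0$, and one glues provided the subsheaf $\cL\otimes\cO_T(-Z')|_E$ is itself globally generated, which by \ref{CR}(1) (applied on $E$, after base change to $\overline{k(y)}$) holds once its degree is $\ge 2p_a(E)$.

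The main obstacle is precisely this last gluing. My degree estimate only guarantees $\deg(\cL\otimes\cO_T(-Z')|_E)\ge 2p_a(E)-1$, whereas global generation on the integral Gorenstein curve $E$ needs degree $\ge 2p_a(E)$; the boundary case $\deg=2p_a(E)-1$, which forces $\cL\cdot E=0$ and $(\omega_T\otimes\cO_T(Z))\cdot E=-1$, can genuinely fail to be globally generated. To close this one-unit gap I expect to replace the crude induction by a computation sequence adapted to the point $t$: to generate at $t$ one orders the components of the fundamental cycle so that the component through $t$ is added at a favourable step, exploiting the numerical connectedness of $Z$ and the bounds $p_a(E_i)\le 0$ furnished by rationality (which make every nonnegative-degree bundle on a single $E_i$ globally generated). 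This combinatorial refinement, rather than any new cohomological input, is where the real work lies.
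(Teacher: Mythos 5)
You have proved exactly half of the lemma. Note first that the paper itself contains no proof of this statement --- it is quoted with a \qed{} from \cite{Lip}, Theorem 12.1 --- so your attempt must stand on its own. The vanishing $H^1(T,\cL)=0$ is correct and is essentially the classical Artin--Lipman argument: the reduction via the theorem on formal functions (your cofinality remark $\cO_T(-ND_0)\subseteq\mathfrak m\cO_T\subseteq\cO_T(-D_0)$ is sound, since $g^{-1}(y)$ is connected of pure dimension one, and in fact each $H^1(Z_n,\cL|_{Z_n})$ is directly a quotient of some $H^1(ND_0,\cL|_{ND_0})$), followed by the peeling induction in which rationality supplies, via adjunction, a component $E\leq Z$ with $(\omega_T\otimes\cO_T(Z))\cdot E\leq -1$, and Serre duality on the integral Gorenstein curve $E$ over $k(y)$ kills $H^1$ of any invertible sheaf of degree $\geq 2p_a(E)-1$. (In the bare generality of Section~\ref{section genus} you should at least justify the duality and adjunction on $T$ that you invoke --- setting this up is part of what \cite{Lip} does --- though in the paper's applications $T$ is an open piece of an arithmetic surface, where \cite{LQ}, 9.1.37 applies.)

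The global generation half contains a genuine gap, which you have located accurately yourself: in the sequence $0\to\cL\otimes\cO_T(-Z')|_E\to\cL|_Z\to\cL|_{Z'}\to 0$ the subsheaf is only guaranteed degree $2p_a(E)-1$ on $E$, one short of what global generation requires, the boundary case ($\cL\cdot E=0$, $(\omega_T\otimes\cO_T(Z))\cdot E=-1$) genuinely occurs, and the ``computation sequence adapted to $t$'' you gesture at is precisely the nontrivial content of Lipman's proof and is not carried out; so as written the proposal does not establish global generation. Two remarks. First, in the geometric case your own reduction already closes the gap without any peeling: you correctly reduced (via $H^1(T,\cL\otimes\cO_T(-Z))=0$ and Nakayama) to showing $\cL|_Z$ globally generated for $Z$ a large multiple of the fundamental cycle, and when the residue field is algebraically closed and $Z$ is a curve over it, Theorem~\ref{CR}(1) applies directly to $C=Z$ with $H=\cL|_Z$: every subcurve $B\subseteq Z$ is an effective divisor $0<B\leq Z$ (Remark~\ref{Id}(3)), hence exceptional, hence $p_a(B)\leq 0$ by Proposition~\ref{rational}, while $\cL\cdot B\geq 0$, so $\cL\cdot B\geq 2p_a(B)$ --- the one-unit deficit never arises because the criterion is applied to the untwisted $\cL$ on all subcurves rather than to the twisted subsheaf on a single component. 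Second, this shortcut does not rescue the lemma in the generality the paper actually needs: in mixed characteristic $\cO_Z$ is not a $k(y)$-algebra for large exceptional $Z$ (unlike the reduced curve $E$), so \cite{CFHR} is inapplicable and the hard case remains; this is exactly why the paper cites \cite{Lip} rather than reproving the statement, e.g.\ for its use in Proposition~\ref{genus} and Corollary~\ref{corollary}(3).
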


\begin{Prop}\label{genus} Let $B$ be an effective Weil divisor on $Y$, and let $\widetilde{B}$ be its strict transform in $T$. Then 
there is an exceptional divisor $D$  on $T$ such that $\cI_B\cO_T=\cO_T(-\widetilde{B}-D)$,  $g_*\cO_T(-\widetilde{B}-D)=\cI_B$ and $R^1g_*\cO_T(-\widetilde{B}-D)=0$.
\end{Prop}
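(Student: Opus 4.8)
The plan is to exhibit $\cO_T(-\widetilde{B}-D)$ as the reflexive hull of the inverse image ideal $\cI_B\cO_T$ and to prove the three assertions by pushing forward along $g$, with rationality entering at exactly one decisive point. Throughout I would use that, $Y$ being normal and affine and $g$ proper birational, $g_*\cO_T=\cO_Y$ and $\Gamma(\cO_T)=R$; write $j:U=Y\setminus\{y\}\hookrightarrow Y$ and identify $g^{-1}(U)\cong U$. The strict transform $\widetilde{B}$ is the scheme-theoretic closure of $g^{-1}(B)\cap g^{-1}(U)$, hence a closed subscheme of $g^{-1}(B)=V(\cI_B\cO_T)$; as $T$ is regular, $\widetilde{B}$ is an effective Cartier divisor, so $\cI_B\cO_T\subseteq\cO_T(-\widetilde{B})$, with equality over $g^{-1}(U)$.

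First I would introduce $\cL:=(\cI_B\cO_T)^{\vee\vee}$. On the regular surface $T$ a rank-one reflexive sheaf is invertible, and since $\cI_B\cO_T$ is already locally principal at every codimension-one point (those local rings being DVRs), $\cL$ agrees with $\cI_B\cO_T$ away from finitely many closed points of $g^{-1}(y)$. Writing $\cL=\cO_T(-\Delta)$ for the effective Cartier divisor it cuts out, the agreement over $U$ forces $\Delta=\widetilde{B}+D$ with $D$ supported on $g^{-1}(y)$, while the chain $\cI_B\cO_T\subseteq\cL\subseteq\cO_T(-\widetilde{B})$ (apply $(-)^{\vee\vee}$ to the first inclusion) gives $D\geq 0$. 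Thus $D$ is an effective exceptional divisor and $\cL=\cO_T(-\widetilde{B}-D)$. It then remains only to prove that the finite-length quotient $Q:=\cL/\cI_B\cO_T$ vanishes, since that is precisely the assertion $\cI_B\cO_T=\cO_T(-\widetilde{B}-D)$.

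The two computations I would carry out are: (i) $g_*(\cI_B\cO_T)=\cI_B$ and $g_*\cL=\cI_B$; and (ii) $R^1g_*(\cI_B\cO_T)=0$. For (i), all three sheaves $\cI_B\cO_T\subseteq\cL\subseteq\cO_T(-\widetilde{B})$ are ideals of $\cO_T$ whose pushforwards are ideals of $\cO_Y$ containing $\cI_B$ and agreeing with it over $U$; since $Y\setminus U$ has codimension two, Lemma~\ref{saturated} (with $D=B$) forces each pushforward to be contained in, hence equal to, $\cI_B$. For (ii) I would exploit that $\cI_B$ is globally generated on the affine $Y$: a finite generating set of the $R$-ideal $\Gamma(Y,\cI_B)$ pulls back to a surjection $\cO_T^{\oplus N}\twoheadrightarrow\cI_B\cO_T$ with coherent kernel $\mathcal{K}$, and the long exact sequence contains $R^1g_*(\cO_T^{\oplus N})\to R^1g_*(\cI_B\cO_T)\to R^2g_*\mathcal{K}$. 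Here $R^1g_*\cO_T=0$ by rationality of $y$ (Proposition~\ref{rational}), and $R^2g_*\mathcal{K}=0$ because the fibres of $g$ have dimension at most one; hence $R^1g_*(\cI_B\cO_T)=0$.

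These combine at once: pushing forward $0\to\cI_B\cO_T\to\cL\to Q\to 0$ and using (ii) gives $Q=g_*Q\cong g_*\cL/g_*(\cI_B\cO_T)$, which is $\cI_B/\cI_B=0$ by (i). This yields $\cI_B\cO_T=\cL=\cO_T(-\widetilde{B}-D)$, together with $g_*\cO_T(-\widetilde{B}-D)=\cI_B$ and $R^1g_*\cO_T(-\widetilde{B}-D)=R^1g_*(\cI_B\cO_T)=0$. I expect the main obstacle to be the invertibility of $\cI_B\cO_T$ itself — equivalently, the vanishing of $Q$, that is, the absence of embedded or isolated points in the scheme-theoretic inverse image $g^{-1}(B)$ — and the delicate point is that this is exactly where the rationality hypothesis is indispensable, entering through $R^1g_*\cO_T=0$ in step (ii). One could alternatively deduce the third assertion from Lipman's Lemma~\ref{global generated} after checking $\deg(\cL|_{E_i})\geq 0$, but the argument above makes that detour unnecessary.
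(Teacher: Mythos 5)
Your proof is correct, but it takes a genuinely different route from the paper's. The paper constructs $D$ numerically: it takes the smallest exceptional divisor in the set $\Lambda=\{Z\geq 0 : (\widetilde{B}+Z)\cdot E_i\leq 0\ \forall i\}$ (citing Badescu for the existence of a minimal element), proves the inclusion $\cI_B\cO_T\subseteq\cO_T(-\widetilde{B}-D)$ by decomposing $\Div_T(x)$ for $x\in\cI_B$, and then invokes Lipman's theorem (Lemma~\ref{global generated}) twice --- once to upgrade the inclusion to an equality via the surjection $g^*g_*\cO_T(-\widetilde{B}-D)\twoheadrightarrow\cO_T(-\widetilde{B}-D)$, and once for the vanishing of $R^1g_*$. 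You instead define $D$ as the divisorial part of $\cI_B\cO_T$ via the reflexive hull, and you replace Lipman's theorem entirely by the elementary observation that a presentation $\cO_T^{\oplus N}\twoheadrightarrow\cI_B\cO_T$ (pulled back from global generators of $\cI_B$ on the affine $Y$) together with $R^1g_*\cO_T=0$ and the fibre-dimension bound kills $R^1g_*(\cI_B\cO_T)$; the vanishing of $Q=\cL/\cI_B\cO_T$ then drops out of the pushforward sequence. What the paper's approach buys is the explicit antinef characterization $(\widetilde{B}+D)\cdot E_i\leq 0$ of $D$ (not actually needed later); what yours buys is self-containedness --- rationality enters only through $R^1g_*\cO_T=0$, and you avoid both the minimal-element argument for $\Lambda$ and the appeal to Lipman's Theorem~12.1. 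All the individual steps you sketch (the codimension-two agreement of $\cI_B\cO_T$ with its double dual, the use of Lemma~\ref{saturated} to pin down the pushforwards, $R^2g_*=0$ from fibre dimension, and $g_*Q=0\Rightarrow Q=0$ for the finite-length $Q$) check out.
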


\begin{proof} Let $\cI:=\cI_B\cO_T$. Let $\Lambda$ be the set of exceptional divisors $Z$ such that 
$$(\widetilde{B}+Z)\cdot E_i\leq 0, \quad \forall i\leq m.$$ 
Exactly as in \cite{Ba}, Lemma 3.18, one can 
prove that there is a smallest element $D$ in $\Lambda$. 

Let $x\in \cI_B\setminus \{ 0\}$. Then $\Div_T(x)=\widetilde{B}+\widetilde{C}+Z$, where $Z$ is exceptional and 
$\widetilde{C}$ is an effective divisor which does not contain any exceptional divisor. 
As $\Div_T(x)\cdot E_i=0$, we have $(\widetilde{B}+Z)\cdot E_i=-E_i\cdot \widetilde{C}\leq 0$. So by definition $Z\in \Lambda$ and $\widetilde{B}+Z\geq \widetilde{B}+D$, hence $\cI\subseteq \cO_T(-\widetilde{B}-D)$.  
Since $g_*(\cO_T(-\widetilde{B}-D))|_{Y\setminus \{y\}}= \cI_B|_{Y\setminus \{y\}}$, it follows from 
Lemma~\ref{saturated} that $g_*\cO_T(-\widetilde{B}-D)\subseteq \cI_B$. 
Hence $$\cI_B\subseteq g_*\cI\subseteq g_*\cO_T(-\widetilde{B}-D)\subseteq \cI_B.$$
Therefore  $g_*\cO_T(-\widetilde{B}-D)=\cI_B$. By Lemma \ref{global generated}, $g^*g_*\cO_T(-\widetilde{B}-D)\twoheadrightarrow \cO_T(-\widetilde{B}-D)$, we have $\cI=\cO_T(-\widetilde{B}-D)$. The same lemma implies that
$R^1g_*\cO_T(-\widetilde{B}-D)=H^1(T, \cO_T(-\widetilde{B}-D))=0$.
\end{proof}

In the next corollary, we use the term ``exceptional divisor on $X$''in the sense of exceptional divisors for $X\to X'$.

\begin{Cor}\label{genus calculate}
\begin{enumerate}
\item Let $X, X', S$ be as in Section~\ref{intro}.  Let $B$ be an effective vertical Weil divisor on $X'$. 
Then there is an exceptional divisor $D_B$ on $X$ 
such that $p_a(\widetilde{B}+D_B)=p_a(B)$. 

\item If $M'$ is a normal proper algebraic surface over a field and with at worst rational singularities, and if $g: M\rightarrow M'$ 
is any resolution of singularities, then for any effective Weil divisor $B$ on $M'$, there is an exceptional divisor $D_B$ such that $p_a(\widetilde{B}+D_B)=p_a(B)$. 
\end{enumerate}
\end{Cor}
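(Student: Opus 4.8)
The plan is to derive both parts from Proposition~\ref{genus} by pushing the structure sequence of the effective Cartier divisor $\widetilde{B}+D_B$ forward along the resolution $g$ and exploiting the vanishing of higher direct images. Since Proposition~\ref{genus} is a statement about a single affine normal surface with one rational singularity, I would first localize: both $X'$ and $M'$ have only finitely many (rational) singular points $y_1,\dots,y_r$, and applying Proposition~\ref{genus} to $\Spec\cO_{Y,y_j}$ (with $Y=X'$ or $Y=M'$) produces, for each $j$, an exceptional divisor $D_j$ supported over $y_j$ satisfying the three conclusions of that proposition near $y_j$. Because $g$ is an isomorphism away from the $y_j$ (so that $\widetilde{B}=B$ and $\cI_B\cO_T=\cO_T(-\widetilde{B})$ there) and the $D_j$ have pairwise disjoint supports, setting $D_B=\sum_j D_j$ globalizes everything: on all of $T$ one has $\cI_B\cO_T=\cO_T(-\widetilde{B}-D_B)$, $g_*\cO_T(-\widetilde{B}-D_B)=\cI_B$, and $R^1g_*\cO_T(-\widetilde{B}-D_B)=0$.

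The heart of the argument is the computation of $g_*\cO_{\widetilde{B}+D_B}$. I would apply $g_*$ to the structure sequence
$$0\to \cO_T(-\widetilde{B}-D_B)\to \cO_T\to \cO_{\widetilde{B}+D_B}\to 0$$
and read off the resulting long exact sequence of direct images. Here two vanishing inputs enter: first, $g_*\cO_T=\cO_Y$ (normality) together with $R^1g_*\cO_T=0$, which is exactly the rationality of the singularities in the form of Proposition~\ref{rational}(3); second, $R^1g_*\cO_T(-\widetilde{B}-D_B)=0$ from Proposition~\ref{genus}. Since the fibres of $g$ have dimension at most $1$, the term $R^2g_*\cO_T(-\widetilde{B}-D_B)$ vanishes too. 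The long exact sequence therefore collapses to yield $g_*\cO_{\widetilde{B}+D_B}\cong\cO_Y/\cI_B=\cO_B$ and $R^1g_*\cO_{\widetilde{B}+D_B}=0$.

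With the higher direct image gone, the conclusion follows from the Leray spectral sequence: $H^i(T,\cO_{\widetilde{B}+D_B})\cong H^i(Y,\cO_B)$ for every $i$, whence $\chi(\cO_{\widetilde{B}+D_B})=\chi(\cO_B)$ and so $p_a(\widetilde{B}+D_B)=1-\chi(\cO_{\widetilde{B}+D_B})=1-\chi(\cO_B)=p_a(B)$. This settles part (2) at once, $M'$ being proper over a field so that all the cohomology groups are finite-dimensional. For part (1) the identical computation applies to $g:X\to X'$; the only point needing care is that $B$ is vertical, supported in a single closed fibre $X'_s$, so that $\cO_B$ and $\cO_{\widetilde{B}+D_B}$ are supported over the closed point $s$ and their cohomology is computed on the proper $k(s)$-curves $X'_s$ and $X_s$.

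Given Proposition~\ref{genus}, which already supplies the crucial tailored vanishing $R^1g_*\cO_T(-\widetilde{B}-D_B)=0$, the corollary is essentially formal, so I do not expect a serious obstacle; the genuine content lies in the choice of $D_B$, since without its minimality the pushforward of $\cO_{\widetilde{B}+D_B}$ would fail to reproduce $\cO_B$ exactly. The one place requiring real attention is the bookkeeping in part (1): the base is not a field, so one must confirm that the arithmetic genus entering Theorem~\ref{CR} (the Euler characteristic over the residue field $k(s)$) is precisely the invariant transported by the isomorphisms $H^i(X,\cO_{\widetilde{B}+D_B})\cong H^i(X',\cO_B)$, and that $D_B=\sum_j D_j$ correctly glues the local output of Proposition~\ref{genus} across the finitely many singular points.
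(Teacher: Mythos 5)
Your proposal is correct and follows essentially the same route as the paper: apply Proposition~\ref{genus} to obtain $D_B$ with $g_*\cO(-\widetilde{B}-D_B)=\cI_B$ and $R^1g_*\cO(-\widetilde{B}-D_B)=0$, push forward the structure sequence of $\widetilde{B}+D_B$, and conclude $g_*\cO_{\widetilde{B}+D_B}=\cO_B$ and $R^1g_*\cO_{\widetilde{B}+D_B}=0$, hence equality of Euler characteristics. Your explicit localization at the finitely many singular points and the Leray bookkeeping are details the paper leaves implicit, but the argument is the same.
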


\begin{proof} 
(1) Let $g: X\to X'$ be the canonical map. Let $\cI_B$ be the ideal sheaf on $X'$ defined as 
in Remark~\ref{Id} (3) and let $\widetilde{B}$ be the strict transform of $B$ in $X$.
By Proposition \ref{genus}, there is an exceptional divisor $D_B$ on $X$ 
such that $g_*\cO_X(-\widetilde{B}-D_B)=\cI_B$ and $R^1g_*\cO_X(-\widetilde{B}-D_B)=0$. Consider the exact sequence:
$$0\rightarrow \cO_X(-\widetilde{B}-D_B)\rightarrow \cO_X\rightarrow \cO_{\widetilde{B}+D_B}\rightarrow 0.$$
Push-forwarding  by $g_*$, we get:
$$0\rightarrow \cI_B\rightarrow\cO_{X'}\rightarrow g_*\cO_{\widetilde{B}+D_B}\rightarrow R^1g_*\cO_X(-\widetilde{B}-D_B)=0,$$
and 
$$0=R^1g_*\cO_X\rightarrow R^1g_*\cO_{\widetilde{B}+D_B}\rightarrow R^2g_*\cO_X(-\widetilde{B}-D_B)=0.$$
So $g_*\cO_{\widetilde{B}+D_B}=\cO_B$ and $R^1g_*\cO_{\widetilde{B}+D_B}=0$.
Hence $\chi(\cO_{\widetilde{B}+D_B})=\chi(\cO_B),$ and $p_a(B)=p_a(\widetilde{B}+D_B)$.

(2) The proof is the same as for (1).
\end{proof}
\end{section}

\section{proof of the main theorem}\label{proof 1}
Now we can prove the main theorem. We keep the notation of 
section $\ref{intro}$.

\begin{Lem}\label{Weil}
For any vertical subcurve $B_1$ of $X'$, there is a vertical effective 
Weil divisor $B$ on $X'$ and an open subset $U'\subseteq X'$ 
with complement of codimension at least $2$, such that $B\cap U'=B_1\cap U'$ and $p_a(B)\geq p_a(B_1)$. 
\end{Lem}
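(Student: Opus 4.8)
The plan is to take $B$ to be the \emph{divisorial hull} of $B_1$ and to show that replacing $B_1$ by $B$ can only delete a finite-length subsheaf, which forces the arithmetic genus to go up. Concretely, writing $\Gamma_1,\dots,\Gamma_r$ for the prime divisors in the support of $B_1$, with generic points $\eta_1,\dots,\eta_r$, I would set
\[
B:=\sum_{i=1}^{r} v_{\eta_i}(B_1)\,\Gamma_i,\qquad v_{\eta_i}(B_1):=\Len_{\cO_{X',\eta_i}}\cO_{B_1,\eta_i}.
\]
These lengths are finite because $\eta_i$ has codimension one in the surface $X'$, so $\cO_{B_1,\eta_i}$ is artinian; and each $\Gamma_i$ is vertical since $B_1$ is, so $B$ is a vertical effective Weil divisor on $X'$.

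Next I would construct $U'$. Let $\Sigma\subset X'$ be the union of the (finitely many) singular points of $X'$ and the embedded points of $B_1$, and put $U':=X'\setminus\Sigma$. Because $X'$ is a normal surface its singular locus is a finite set of closed points, and every embedded point of a one-dimensional subscheme is closed; hence $\Sigma$ consists of closed points and the complement of $U'$ has codimension at least $2$. On $U'$ the surface $X'$ is regular and $B_1$ has no embedded points, so $B_1|_{U'}$ is an effective Cartier divisor. As $U'$ is locally factorial such a divisor is determined by its multiplicities along the $\Gamma_i\cap U'$, which are exactly the integers $v_{\eta_i}(B_1)$; therefore $B\cap U'=B_1\cap U'$.

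Finally, applying Lemma~\ref{saturated} with $D=B$ and $Z=B_1$ gives $\cI_{B_1}\subseteq\cI_B$, hence a short exact sequence of $\cO_{X'}$-modules
\[
0\longrightarrow \cI_B/\cI_{B_1}\longrightarrow \cO_{B_1}\longrightarrow \cO_B\longrightarrow 0 .
\]
The quotient $Q:=\cI_B/\cI_{B_1}$ vanishes on $U'$, so it is supported on the finite set $\Sigma$ and has finite length; additivity of $\chi$ then yields $\chi(\cO_{B_1})=\Len Q+\chi(\cO_B)$, that is $p_a(B)=p_a(B_1)+\Len Q\geq p_a(B_1)$. The step I expect to demand the most care is the identification $B\cap U'=B_1\cap U'$: one must be sure that after removing only a codimension-two set $B_1$ genuinely becomes an effective Cartier divisor coinciding with its divisorial hull $B$, which is precisely where the normality of $X'$ (regularity in codimension one, isolated singularities) and the purity of $B_1$ away from $\Sigma$ are used. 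Everything after that is formal, the desired inequality being nothing but the non-negativity of $\Len Q$.
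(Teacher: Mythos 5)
Your proposal is correct and follows essentially the same route as the paper: take $B$ to be the effective Weil divisor agreeing with $B_1$ away from its finitely many embedded points, apply Lemma~\ref{saturated} to get $\cI_{B_1}\subseteq\cI_B$, and conclude from the non-negativity of $\chi$ of the skyscraper quotient $\cI_B/\cI_{B_1}$. The only differences are cosmetic: you make the multiplicities of $B$ explicit via lengths at the generic points and also remove the singular points of $X'$ from $U'$ (the paper only removes the embedded points, using normality of $X'$ rather than regularity to identify $B_1\cap U'$ with a Weil divisor), which changes nothing of substance.
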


\begin{proof} The curve 
$B_1$ has only finitely many embedded points. Let 
$U'$ be the complement of these points. Now as $B_1\cap U$ do not admit any embedded points, it is an effective Weil divisor, and extends to a unique effective Weil divisor $B$ on $X'$. By Lemma \ref{saturated}, $\cI_{B_1}\subseteq \cI_B$ and, since the cokernel is clearly a skyscraper sheaf, we have $\chi(\cO_B)-\chi(\cO_{B_1})=-\chi(\cI_B/\cI_{B_1})\leq 0$, hence $p_a(B)\geq p_a(B_1)$.  
\end{proof}

Since in our main theorem we do not assume that the generic fibre is
geometrically  connected, we need some more
preparation. Let 
$$\xymatrix{
X\ar[r]^{f}\ar[dr]^{g}& S\\
& S'\ar[u]_{\pi}}
$$
be the Stein factorization of $f$. For any $s\in S$, 
each connected component of $X_{s}$ is equal set-theoretically 
to some fibers of $X\to S'$. 
The following proposition is well known: 

\begin{Prop}\label{general type does not depend on stein factorization}
\begin{enumerate}
\item The dualizing sheaf $\omega_{S'/S}$ is invertible, 
 and we have $\omega_{X/S}=g^*\omega_{S'/S}\otimes \omega_{X/{S'}}$;
\item The arithmetic genus of (the generic fiber of)
  $f$ is larger than $1$ if and only if that of $g$ is larger than
  $1$. \qed
\end{enumerate}
\end{Prop}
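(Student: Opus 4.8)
The plan is to analyse the Stein factorization $X\xrightarrow{g}S'\xrightarrow{\pi}S$ (with $f=\pi\circ g$) directly, reducing both assertions to standard facts about the finite map $\pi$ and the field of constants of the generic fibre. First I would pin down the geometry of $S'$. Since $S'=\Spec_S(f_*\cO_X)$ arises from the Stein factorization of a proper morphism whose source $X$ is regular (hence normal and integral), $S'$ is integral and normal; being of dimension $1$, it is therefore regular, i.e.\ a Dedekind scheme. As $\pi$ is finite with $S'$ Cohen--Macaulay and $S$ regular of dimension $1$, miracle flatness shows $\pi$ is flat; applying the same criterion to $g$ (with $X$ Cohen--Macaulay of dimension $2$, $S'$ regular of dimension $1$, and all fibres of pure dimension $1$) shows $g$ is flat as well.

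For assertion (1), since $\pi$ is finite and flat with both $S$ and $S'$ regular---a fortiori Gorenstein---the morphism $\pi$ is Gorenstein, so its relative dualizing sheaf $\omega_{S'/S}=\mathcal{H}om_{\cO_S}(\pi_*\cO_{S'},\cO_S)$ is an invertible $\cO_{S'}$-module. The identity $\omega_{X/S}\cong g^*\omega_{S'/S}\otimes\omega_{X/S'}$ then follows from the transitivity of relative dualizing sheaves along the composition $f=\pi\circ g$: both $g$ and $\pi$ are flat with (fibrewise) Cohen--Macaulay fibres, so the relative dualizing complexes compose, and since $\omega_{S'/S}$ is invertible the formula descends to the stated isomorphism of invertible sheaves.

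For assertion (2), write $\eta$ for the generic point of $S$, set $K=\kappa(\eta)$, and let $L=K(S')=\kappa(\eta')$, where $\eta'$ is the generic point of $S'$; then $L=H^0(X_\eta,\cO_{X_\eta})$ is the field of constants of the generic fibre, $[L:K]=:d$ is finite, and the generic fibre $X_{\eta'}$ of $g$ is the same scheme as $X_\eta$, now viewed as a geometrically connected curve over $L$. Computing the two Euler characteristics, over $L$ one has $h^0(\cO_{X_{\eta'}})=1$, so $p_a(X_{\eta'})=\dim_L H^1(X_{\eta'},\cO)$, while over $K$ one has $h^0(\cO_{X_\eta})=\dim_K L=d$ and $\dim_K H^i(X_\eta,\cO)=d\cdot\dim_L H^i(X_{\eta'},\cO)$. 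Combining these gives the single relation
$$p_a(X_\eta)-1=d\,\bigl(p_a(X_{\eta'})-1\bigr),$$
and since $d\ge 1$ the left-hand side is positive if and only if the right-hand side is, which is exactly the claim.

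The bookkeeping in (2) is light; the point demanding genuine care is (1), where one must ensure that the abstract transitivity of dualizing complexes really descends to an isomorphism of \emph{invertible} sheaves. The safe route is to check explicitly that $\pi$ is a Gorenstein morphism---equivalently that each fibre, being a quotient of a discrete valuation ring by a power of its maximal ideal, is a Gorenstein Artinian algebra---so that $\omega_{S'/S}$ is a line bundle, and that $g$ and $\pi$ are flat morphisms with Cohen--Macaulay fibres, after which the composition formula for $\omega$ is standard.
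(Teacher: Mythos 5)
Your proof is correct. Note that the paper itself offers no argument here: the proposition is stated as ``well known'' and closed with a \textsc{qed} symbol, so there is nothing to compare against. Your write-up is the standard argument and is complete: $S'$ is normal integral of dimension $1$, hence Dedekind; miracle flatness gives flatness of $\pi$ and $g$; a finite flat morphism between regular schemes is l.c.i.\ (its fibres are locally $\cO_{S',s'}/(\varpi^e)$, hence Gorenstein), so $\omega_{S'/S}$ is invertible and the transitivity formula $\omega_{X/S}\cong g^*\omega_{S'/S}\otimes\omega_{X/S'}$ applies (e.g.\ \cite{LQ}, Theorem 6.4.9); and the identity $p_a(X_\eta)-1=[L:K]\,(p_a(X_{\eta'})-1)$, coming from $H^0(X_\eta,\cO_{X_\eta})=L$ and $\dim_K=[L:K]\cdot\dim_L$, gives (2). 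This is exactly the content the author is implicitly relying on, and your version supplies the details the paper omits.
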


\begin{Lem}[\cite{L}, Lemma 4]\label{special} Let
    $s\in S$ be a closed point. Let $\cL\in \mathrm{Pic}(X)$. Then
$H^{1}(X_{s}, \cL|_{X_{s}})=0$ if for any effective divisor $0<A\leq
X_{s}$, we have $\cL\cdot A>(\omega_{X/S}+A)\cdot A$. In particular, 
$H^{1}(X_{s}, \omega_{X_{s}/k(s)}^{\otimes n})=0$ whenever $n\geq 2$. 
\end{Lem}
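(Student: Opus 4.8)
The plan is to prove the vanishing by Serre duality on the fibre, converting it into a statement about global sections, and then to rule out those sections by a degree computation carried out component by component. First I would observe that $X_s$ is a Cartier divisor in the regular scheme $X$, hence a local complete intersection, hence Gorenstein; moreover its dualizing sheaf is $\omega_{X_s/k(s)}\cong \omega_{X/S}|_{X_s}$, because both the conormal sheaf of $X_s$ in $X$ and $f^*\omega_S|_{X_s}$ are trivial. Since $\cL|_{X_s}$ and $\omega_{X_s/k(s)}$ are invertible, Serre duality over the field $k(s)$ gives
$$H^1(X_s,\cL|_{X_s})^\vee\cong \mathrm{Hom}_{\cO_{X_s}}(\cL|_{X_s},\omega_{X_s/k(s)})=H^0(X_s,\mathcal{N}),\qquad \mathcal{N}:=(\omega_{X/S}\otimes\cL^{-1})|_{X_s}.$$
Thus it suffices to show $H^0(X_s,\mathcal{N})=0$, and the numerical hypothesis $\cL\cdot A>(\omega_{X/S}+A)\cdot A$ rewrites exactly as $\mathcal{N}\cdot A<-A^2$ for every $0<A\le X_s$.

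Assume for contradiction that there is a nonzero $t\in H^0(X_s,\mathcal{N})$. For each prime component $\Gamma_i$ of $X_s$, of multiplicity $n_i$, let $a_i\ge 0$ be the vanishing order of $t$ along $\Gamma_i$ (computed in the Artinian local ring at the generic point of $\Gamma_i$), and set $A:=\sum_i\min(a_i,n_i)\Gamma_i$. Since $t\ne 0$ we have $A<X_s$, so $R:=X_s-A>0$, and the components of $R$ are exactly those $\Gamma_i$ with $a_i<n_i$. After dividing $t$ by a local equation of $A$ one obtains a section $t'$ of $\mathcal{N}\otimes\cO_{X_s}(-A)$ which does not vanish identically along any component of $R$. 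Restricting $t'$ to each prime $\Gamma\le R$ with its reduced structure yields a nonzero section of an invertible sheaf on the integral projective curve $\Gamma$, which therefore has nonnegative degree: $(\mathcal{N}-A)\cdot\Gamma\ge 0$. Summing these inequalities over the components of $R$ with multiplicities gives $\mathcal{N}\cdot R\ge A\cdot R$. As $X_s=A+R$ is a full fibre, $X_s\cdot R=0$, hence $A\cdot R=-R^2$, so $\mathcal{N}\cdot R\ge -R^2$. This contradicts the hypothesis applied to $A'=R$, namely $\mathcal{N}\cdot R<-R^2$. Therefore $H^0(X_s,\mathcal{N})=0$, and the vanishing follows.

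The step I expect to be the main obstacle is the passage from a single nonzero global section $t$ on the possibly non-reduced and reducible curve $X_s$ to the per-component degree bounds: one must define the maximal ``vanishing subdivisor'' $A$ carefully and verify that the residual section $t'$ is generically nonzero on every component of $R$, all over the possibly imperfect residue field $k(s)$; by contrast the intersection-theoretic identities $X_s\cdot R=0$ and $A\cdot R=-R^2$ are then routine. Finally, for the last assertion I would take $\cL=\omega_{X/S}^{\otimes n}$, for which the required inequality becomes $(n-1)\,\omega_{X/S}\cdot A>A^2$ for all $0<A\le X_s$. Since $X$ is minimal of fibre genus $\ge 2$, the sheaf $\omega_{X/S}$ is nef, so $\omega_{X/S}\cdot A\ge 0$, while $A^2\le 0$ by Zariski's lemma; as $n\ge 2$ the left-hand side is $\ge 0\ge A^2$, and the two sides cannot vanish simultaneously, because $A^2=0$ forces $A$ to be a positive rational multiple of a union of connected fibre components, each of which meets $\omega_{X/S}$ strictly positively by the genus hypothesis (via the Stein factorization $X\to S'$). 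This yields the strict inequality, hence $H^1(X_s,\omega_{X_s/k(s)}^{\otimes n})=0$ for all $n\ge 2$.
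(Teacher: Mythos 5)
Your proposal is correct, but it is not parallel to the paper's proof in an interesting way: the paper does not prove the first (general) assertion at all --- it simply cites Lee's Lemma~4 in \cite{L} --- and only verifies the numerical hypothesis for $\cL=\omega_{X/S}^{\otimes n}$. For that second part your argument coincides with the paper's: reduce to $(n-1)\,\omega_{X/S}\cdot A>A^2$, use $\omega_{X/S}\cdot A\ge 0$ (minimality) and $A^2\le 0$ (Zariski's lemma), and exclude the case $\omega_{X/S}\cdot A=A^2=0$ because $A^2=0$ forces $A$ to be a rational combination of fibres of the Stein factorization $g\colon X\to S'$, each of which has positive canonical degree. Where you differ is that you supply a self-contained proof of the general vanishing criterion via Serre duality on the Gorenstein fibre and the ``maximal vanishing subdivisor'' decomposition $X_s=A+R$; this is essentially the standard argument (and in substance Lee's own proof). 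The delicate points you flag are the right ones and do go through: $A<X_s$ because $X_s$ is a divisor in a regular scheme, hence Cohen--Macaulay with no embedded points, so a section of an invertible sheaf vanishing at every generic point is zero; and the residual section $t'$ exists in $H^0\bigl(R,\mathcal{N}(-A)|_R\bigr)$ because the image of $t$ in $\mathcal{N}\otimes\cO_A$ vanishes at all generic points of the Cohen--Macaulay curve $A$ and is therefore zero, after which exactness of $0\to\mathcal{N}(-A)|_R\to\mathcal{N}|_{X_s}\to\mathcal{N}|_A\to 0$ gives $t'$, whose restriction to each reduced component of $R$ is generically a unit by the choice of the multiplicities $\min(a_i,n_i)$. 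None of this needs $k(s)$ to be perfect or algebraically closed. In short, your write-up proves more than the paper does on the page, at the cost of reproducing a known lemma; the numerical application is identical.
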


\begin{proof} The first part of
the lemma is proved in \cite{L}. We only have to 
show that $\cL:=\omega^{\otimes n}_{X/S}$ satisfies the required
condition for any $n\ge 2$. If $\omega_{X/S}^{\otimes n}\cdot A\leq
(\omega_{X/S}+A)\cdot A$, then $\omega_{X/S}\cdot A-A^2\leq 0$ 
because $\omega_{X/S}\cdot A\ge 0$ by the minimality
  of $X\to S$, hence $\omega_{X/S}\cdot A=0, A^2=0$, in particular
$A$ is the sum of some multiples of fibres of 
$g$ (see the proof of \cite{LQ}, Theorem 9.1.23).  
But $\omega_{X/S}\cdot X_{s'}>0$ for any closed point $s'\in S'$ by Proposition \ref{general type does not depend on stein factorization}. Contradiction. 
\end{proof}
\medskip

\begin{proof}[Proof of the  main theorem]
First we observe that  the
 statement
is local on $S$, so we assume $S$ is local. As the constructions of $X$
and of $X'$ commute with \'etale base changes, replacing $S$ by its
strict henselisation if necessary, we can assume that the residue
field $k$ at the closed point $s\in S$ is algebraically closed. 
Here we use the hypothesis that $k(s)$ is perfect.

By Lemma \ref{special} and the upper-semicontinuity theorem
(\cite{H1}, III.12), for any $n\geq 2$, we have 
$R^1f_*(\omega_{X/S}^{\otimes n})=0$. So  
$R^1f'_*(\omega_{X'/S}^{\otimes n})=0$ since 
$\pi_{*}\omega_{X/S}^{\otimes n}=\omega_{X'/S}^{\otimes n}$ and 
$$R^1\pi_{*}\omega_{X/S}^{\otimes n}
=R^1\pi_*(\pi^{*}\omega_{X'/S}^{\otimes n})=
R^1\pi_*\cO_X\otimes_{\cO_{X'}}\omega_{X'/S}^{\otimes n}=0.$$ 
Again by the upper-semicontinuity theorem 
we have $H^1(X'_{s},\omega_{X'/S}^{\otimes n})=0$, 
and the canonical morphism $H^0(X', \omega_{X'/S}^{\otimes n})\otimes k\rightarrow
H^0(X'_s,\omega_{X'_s/k}^{\otimes n})$ is an isomorphism.

Let $n\ge 3$. Then it is enough to prove that 
$\omega_{X'/S}^{\otimes n}|_{X'_s}=\omega_{X'_s/k}^{\otimes n}$ is very ample.  Let
$C=X'_s$.  This is a
Cohen-Macaulay curve over $k$. We will show for any subcurve
$B_1$ of $C$ that 
$$n (\omega_{X'_s/k}\cdot B_1)\geq 3(\omega_{X'_s/k}\cdot B_1) \geq
2p_a(B_1)+1.$$ 
Theorem~\ref{CR} (2) will then imply that $\omega_{X'_s/k}^{\otimes n}$ is very ample.
Let $B$ be the effective Weil divisor on $X'$ as given by 
Lemma \ref{Weil}. By construction, $p_a(B)\ge p_a(B_1)$.  
As $B$ and $B_1$ differ at worst by a zero-dimensional closed subset,
we have $\omega_{X'_s/k}\cdot B=\omega_{X'_s/k}\cdot B_1$ 
(use {\it e.g.} \cite{GLL}, Prop. 6.2). Therefore it is enough to show 
the desired inequality for $B$. 

By Corollary \ref{genus calculate}, $p_a(B)=p_a(\widetilde{B}+D)$ for
some exceptional divisor $D$. Note in our case that 
$\omega_{X'_s/k}\cdot B= \omega_{X'/S}\cdot B=\omega_{X/S}\cdot \widetilde{B}=\omega_{X/S}\cdot (\widetilde{B}+D)$ since $\omega_{X/S}\cdot D=0$ by our assumption.  So we need to prove: $$3\omega_{X/S}\cdot (\widetilde{B}+D)\geq 2p_a(\widetilde{B}+D)+1=3+(\widetilde{B}+D)^2+\omega_{X/S}\cdot (\widetilde{B}+D),$$
\ie
$$2\omega_{X/S}\cdot(\widetilde{B}+D)-(\widetilde{B}+D)^2\geq 3.$$
This is true since $\omega_{X/S}\cdot(\widetilde{B}+D)\geq 1$, 
$-(\widetilde{B}+D)^2\geq 0$, and if the left-hand
  side
is equal to $2$, then $\omega_{X/S}\cdot(\widetilde{B}+D)=1$ and 
$(\widetilde{B}+D)^2=0$, which is impossible as 
$(\widetilde{B}+D)^2 +\omega_{X/S}\cdot(\widetilde{B}+D)$ is always even.
\end{proof}

\begin{Prop}[\cite{L}, Theorem 7] \label{Lee} Keep the notation of
  Theorem \ref{main} and suppose $S$ is affine.
Then $\omega_{X/S}^{\otimes n}$ is globally generated for all $n\ge 2$.
\end{Prop}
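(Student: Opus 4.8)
The plan is to prove that $\omega_{X/S}^{\otimes n}$ is globally generated for $n\geq 2$ by essentially the same strategy used for the Main Theorem, invoking the numerical criterion of Catanese--Franciosi--Hulek--Reid (Theorem~\ref{CR}) in its part (1) this time, but applied on the \emph{canonical model} $X'$ rather than on $X$ itself. As observed in the proof of the Main Theorem, since $\pi_*\omega_{X/S}^{\otimes n}=\omega_{X'/S}^{\otimes n}$ and the map $\pi\colon X\to X'$ has $R^1\pi_*\cO_X=0$, global generation of $\omega_{X/S}^{\otimes n}$ is equivalent to global generation of $\omega_{X'/S}^{\otimes n}$. So the first step is to reduce to $X'$ and note that global generation can be checked fibrewise: because $S$ is affine and we will again establish $R^1f'_*(\omega_{X'/S}^{\otimes n})=0$ (via Lemma~\ref{special} and upper-semicontinuity, exactly as in the Main Theorem), the cohomology and base change theorem shows that sections of $\omega_{X'/S}^{\otimes n}$ over $X'$ surject onto sections of $\omega_{X'_s/k}^{\otimes n}$ on each closed fibre, so it suffices to prove global generation of $\omega_{X'_s/k}^{\otimes n}$ on each $C=X'_s$.

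Next I would reduce to the algebraically closed residue field case by the same étale-localization argument: replace $S$ by its strict henselization, using the hypothesis that the residue field is perfect so that this base change makes $k$ algebraically closed while preserving the formation of $X$ and $X'$. With $C=X'_s$ a Cohen--Macaulay curve over the algebraically closed field $k$, I would invoke Theorem~\ref{CR}(1): it suffices to show that for every generically Gorenstein subcurve $B_1\subseteq C$ one has $n(\omega_{X'_s/k}\cdot B_1)\geq 2p_a(B_1)$. Since $n\geq 2$ it is enough to prove $2(\omega_{X'_s/k}\cdot B_1)\geq 2p_a(B_1)$, i.e. $\omega_{X'_s/k}\cdot B_1\geq p_a(B_1)$.

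The heart of the argument then runs parallel to the Main Theorem. I would pass from the subcurve $B_1$ to the effective vertical Weil divisor $B$ on $X'$ supplied by Lemma~\ref{Weil}, which satisfies $p_a(B)\geq p_a(B_1)$ and agrees with $B_1$ outside codimension two, so that $\omega_{X'_s/k}\cdot B=\omega_{X'_s/k}\cdot B_1$; hence it suffices to verify $\omega_{X'_s/k}\cdot B\geq p_a(B)$. Applying Corollary~\ref{genus calculate} I write $p_a(B)=p_a(\widetilde B+D)$ for an exceptional divisor $D$ on $X$, and using $\omega_{X/S}\cdot D=0$ together with the adjunction formula on the regular surface $X$, the desired inequality becomes
$$\omega_{X/S}\cdot(\widetilde B+D)\;\geq\; p_a(\widetilde B+D)\;=\;1+\tfrac12\big((\widetilde B+D)^2+\omega_{X/S}\cdot(\widetilde B+D)\big),$$
which rearranges to $\omega_{X/S}\cdot(\widetilde B+D)-(\widetilde B+D)^2\geq 2$. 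This now follows from the two facts $\omega_{X/S}\cdot(\widetilde B+D)\geq 1$ (by minimality and Proposition~\ref{general type does not depend on stein factorization}) and $-(\widetilde B+D)^2\geq 0$, combined with the parity observation that $(\widetilde B+D)^2+\omega_{X/S}\cdot(\widetilde B+D)$ is always even, so the left-hand side cannot equal the odd value $1$. The main obstacle, as before, is the absence of an adjunction formula directly on the singular model $X'$; this is precisely what Corollary~\ref{genus calculate} circumvents by transporting the arithmetic-genus computation back to the regular surface $X$, and once that tool is in hand the remaining inequalities are elementary, so I expect this proof to be genuinely shorter than the case-by-case analysis of \cite{L}.
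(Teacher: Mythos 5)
Your argument is correct, but it takes a genuinely different (and heavier) route than the paper's own proof. You mirror the proof of the Main Theorem on the canonical model $X'$: you reduce to the fibre $X'_s$, pass from a subcurve $B_1$ to a Weil divisor $B$ via Lemma~\ref{Weil}, and transport the genus computation back to $X$ via Corollary~\ref{genus calculate}, ending with $\omega_{X/S}\cdot(\widetilde B+D)-(\widetilde B+D)^2\geq 2$. The paper instead stays entirely on the \emph{regular} model $X$ and applies Theorem~\ref{CR}(1) to $C=X_s$: since $X$ is regular, every subcurve of $X_s$ is already an effective Cartier divisor $0<B\leq X_s$, adjunction holds directly, and the required inequality becomes $\omega_{X/S}\cdot B-B^2\geq 2$, which follows from non-negativity of both terms (minimality and Zariski's lemma), evenness, and the impossibility of $\omega_{X/S}\cdot B=B^2=0$ already noted in the proof of Lemma~\ref{special}. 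So the entire machinery of Section~\ref{section genus} (Lemma~\ref{Weil}, Proposition~\ref{genus}, Corollary~\ref{genus calculate}) is unnecessary for global generation --- it is needed only for very ampleness, where one must work on the singular model $X'$ because the very-ampleness of $\omega_{X'_s/k}^{\otimes n}$ cannot be read off from $X_s$ alone. Your version is valid (the equivalence of global generation on $X$ and on $X'$ that you use is exactly the parenthetical remark in Remark~\ref{rmk}, and your justification of $\omega_{X/S}\cdot(\widetilde B+D)\geq 1$ is most cleanly seen from the relative ampleness of $\omega_{X'/S}$), but you should be aware that the direct argument on $X$ is substantially shorter and is the one the paper intends as the ``more synthetic'' replacement for Lee's case analysis.
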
 

\begin{proof} Keep the notation of the above proof. Then it is enough
  to prove that $\omega_{X_s/k}^{\otimes n}$ is globally
generated. By Theorem \ref{CR} (1), it is enough to prove that 
$\omega_{X/S}^{\otimes n}\cdot B\geq 2p_a(B)$ for all $0<B\leq X_s$,
or that 
$$2\omega_{X/S}\cdot B\geq 2+(\omega_{X/S}\cdot B+B^2)$$
which is equivalent to 
$$\omega_{X/S}\cdot B-B^2\geq 2.$$
It is clear that the left-hand side is a non-negative even number, and it is zero only if both $\omega_{X/S}\cdot B=0$ and $B^2=0$. But this is impossible as we pointed out in the proof of Lemma \ref{special}.
\end{proof}

\begin{section}{Proof of the main theorem in the equal-characteristic case}\label{proof 2}

In this section we give an alternative proof of the main theorem under the 
assumption that $S$ is of equal-characteristic (\ie $\cO_S$ contains a field). 
We keep the notation of Section $1$. 
Using  Lemma \ref{special} and the upper
semi-continuity theorem (\cite{H1}, III.12), we find that it is sufficient to prove $\omega_{X'_s/k(s)}^{\otimes n}$ is very ample for $n\geq 3$.  
Note that this conclusion in fact depends only on $X_s$: the pluri-canonical morphism
restricted to the special fibre is exactly that defined by $H^0(X_s, \omega_{X_s/k}^{\otimes n})$, and $X'_{s}=\mathrm{Proj}(\bigoplus\limits_{i} H^0(X_s, \omega_{X_s/k}^{\otimes i}))$ is  canonically determined by $X_s$. 
In particular, to prove the main theorem we are able to interchange $X$ with another minimal arithmetic surface which has the same special fibre.

The next lemma will allow us to reduce to the case when $S$ is a curve over a field.

\begin{Lem}\label{1} Suppose $S=\Spec R$ with $R=k[\![t]\!]$ and $k$ algebraically
closed. Let $s$ be the closed point of $S$. 
Then there exists a minimal fibration $h: Y\rightarrow C$ with $Y$ an
integral projective smooth surface of general type over $k$, $C$ an integral projective smooth 
curve over $k$ of genus $g\geq 2$, and a 
closed point $c\in C$ such that $X_{s}$ is isomorphic to $Y_{c}$ as $k$-schemes. 
\end{Lem}

\begin{proof} 
Let $A$ be the Henselisation of $k[t]_{tk[t]}$.  Then $R$ is the completion of $A$. In particular $k[t]/t^2=R/t^2R= A/t^2A$. Let  $S_1$ and $V$ denote the spectra of $k[t]/t^2$ and of $A$ respectively. Thus $\{s\}=\Spec k$ and $S_{1}$ are considered as closed subschemes of both $S$ and $V$. 

As $f: X\rightarrow S$ is flat and projective, it descends to a  
flat projective scheme $W\to T$ with $T$ integral of finite type over $k$, 
\cite{EGA}, IV.11.2.6. By Artin approximation theorem in \cite{Ar}, there is a morphism $\varphi: V\rightarrow T$, such that the diagram below commutes:
$$
\xymatrix{
S_1\ar@{^{(}->}[r]\ar@{^{(}->}[d]& V \ar[d]^{\varphi}\\
S\ar[r]&T}
$$
We claim that $Z：=W\times_{T} V$ is regular. Indeed, 
$Z_{s}\simeq X_{s}$ as $s$ is a closed subscheme of $S_{1}$ and of 
$X\times_{S} S_{1}=W\times_{T} S_{1}=Z\times _{V} S_{1}$. With this
identification, for any closed point $p\in Z_{s}$, we have 
$T_{Z, p}=T_{Z\times _{V} S_{1}, p}= T_{X\times_{S} S_{1}, p}=T_{X,
  p}$, where $T_{?,?}$ denotes the Zariski tangent space. 
As $\dim_k T_{X, p}=2$ because $X$ is regular, we have $\dim_k
T_{Z,p}\le 2$. By the flatness of $Z/V$, $Z$ is a $2$-dimensional
scheme. So $Z$ is regular. Note also that $Z\to V$ is relatively minimal 
as this property can be checked in the closed fiber and 
$Z_s\simeq X_s$. 

By the construction of $V$, $Z\to V$ descends to a relatively minimal
arithmetic surface $Y_{1}\to C_1$ where $C_1$ is an integral affine
smooth curve over $k$. After compactifying $C_1$ and
  $Y_1$, we find a minimal fibration $Y\to C$ as desired. We take the point $c\in C$ 
to be the image of $s\in V$ in $C_1\subset C$. Finally replacing $C$ by some finite cover that is \'etale at $c$ if necessary, we may assume $g(C)\geq 2$, and consequently $Y$ is of general type (\cite{CZ} Theorem 1.3).
\end{proof}

Note that when $g(C)\geq 1$, the relative canonical model $X/C$ coincides with the canonical model of $X$. 

\begin{Thm}\label{new pluri} Let $S, X, X'$ be as in Section 1. 
Suppose further that $S$ is an integral projective smooth curve of genus 
$\ge 2$ over an algebraically closed field $k$.
Let $K_{X'}$ be a canonical divisor of $X'$ over $k$. Then for any sufficiently 
ample divisor $M$ on $S$, $nK_{X'}+{f'}^*M$ is very ample on $X'$ if $n\geq 3$.
\end{Thm}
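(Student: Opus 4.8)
The plan is to reduce the very ampleness of $L:=nK_{X'}+{f'}^{*}M$ on $X'$ to a separation statement on the smooth surface $X$, and then run Reider's method. Let $g\colon X\to X'$ be the contraction of the vertical $(-2)$-curves, so that $f=f'\circ g$. Since the contracted curves are $(-2)$-curves, $X'$ has only rational double points and $g$ is crepant, \ie $K_X=g^{*}K_{X'}$; hence $g^{*}L=nK_X+f^{*}M=K_X+D$ with $D:=(n-1)K_X+f^{*}M$. As $X$ is a minimal surface of general type, $K_X$ is nef, and $f^{*}M$ is nef, so $D$ is nef; moreover, writing $F$ for a fibre, $(f^{*}M)^{2}=(\deg M)^{2}F^{2}=0$ and $K_X\cdot F=2p_a(F)-2\ge 2$, so $D^{2}=(n-1)^{2}K_X^{2}+2(n-1)(\deg M)(K_X\cdot F)$, which exceeds $10$ once $\deg M$ is large. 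The reduction I would use is the standard fact that $L$ is very ample on $X'$ as soon as $g^{*}L$ is globally generated and the induced morphism $X\to\mathbb{P}(H^{0}(X',L))$ contracts precisely the exceptional curves of $g$ and is otherwise an immersion; equivalently, $g^{*}L$ separates any two points lying over distinct points of $X'$ and separates tangent vectors at points off $\mathrm{Exc}(g)$.

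To verify these properties I would apply Reider's criteria to $K_X+D$ (legitimate since $D$ is nef with $D^{2}\ge 10$, choosing $\deg M$ large enough). If $g^{*}L$ had a base point, or failed to separate a length-two subscheme $Z$, Reider's theorem would produce an effective divisor $E\supseteq Z$ with $D\cdot E\le 2$ and $(D\cdot E,E^{2})$ in the short list $\{(0,-1),(0,-2),(1,-1),(1,0),(2,0)\}$. First I would exclude horizontal $E$: if $E$ meets the general fibre then $f^{*}M\cdot E=(\deg M)(F\cdot E)\ge \deg M$, so $D\cdot E\ge \deg M>2$, contradicting $D\cdot E\le 2$ once $\deg M>2$. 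Hence $E$ is vertical, $f^{*}M\cdot E=0$, and $D\cdot E=(n-1)\,K_X\cdot E$.

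The case analysis is then forced by $n\ge 3$ together with the parity of $E^{2}+K_X\cdot E=2p_a(E)-2$. Since $n-1\ge 2$, the integer $(n-1)K_X\cdot E$ equals $0$ when $K_X\cdot E=0$ and is $\ge n-1\ge 2$ otherwise, so it is never $1$; this excludes $(1,-1)$ and $(1,0)$. For $D\cdot E=2$ we would need $K_X\cdot E\ge 1$ and $n-1\le 2$, \ie $n=3$ and $K_X\cdot E=1$, $E^{2}=0$; but then $E^{2}+K_X\cdot E=1$ is odd, contradicting the parity of $2p_a(E)-2$, so $(2,0)$ is excluded as well. The same parity argument kills $(0,-1)$, where $K_X\cdot E=0$, $E^{2}=-1$ give an odd sum. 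The only survivor is $(0,-2)$: here $K_X\cdot E=0$ with $E$ vertical, so by minimality $E$ is a sum of $(-2)$-curves, and $E^{2}=-2$ forces $E$ to be connected (a disconnected union of $(-2)$-configurations has $E^{2}\le -4$). Such an $E$ lies in a single connected component of $\mathrm{Exc}(g)$ and is contracted to one point of $X'$; thus both points of $Z$ have the same image in $X'$, so no separation is required, and no base point or tangent obstruction can occur off $\mathrm{Exc}(g)$. This shows $g^{*}L$ is globally generated and induces a closed immersion of $X'$, \ie $L$ is very ample, as desired.

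The main obstacle is the borderline case $n=3$, $D\cdot E=2$: there the numerics alone permit a vertical $E$ with $K_X\cdot E=1$, $E^{2}=0$, which would genuinely obstruct very ampleness. What rescues the argument is exactly the parity constraint $E^{2}+K_X\cdot E\equiv 0\pmod 2$ from adjunction — the same device used in the proof of the main theorem in Section~\ref{proof 1}. This is why the bound $n\ge 3$ is sharp for this method.
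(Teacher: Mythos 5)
There is a genuine gap, and it sits exactly at the step you dispose of as a ``standard fact.'' You claim that $L=nK_{X'}+{f'}^*M$ is very ample on $X'$ as soon as $g^*L$ is globally generated, separates points of $X$ lying over distinct points of $X'$, and separates tangent vectors off $\mathrm{Exc}(g)$. These conditions only give that the induced morphism $h\colon X'\to\mathbb{P}(H^0(X',L))$ is finite, injective, and an isomorphism onto its image away from the finitely many singular points of $X'$; they do \emph{not} imply that $h$ is a closed immersion, because $h$ may still fail to be unramified at the rational double points $x\in X'$ (the image surface can acquire a worse, non-normal singularity at $h(x)$, and a finite birational bijection onto a non-normal surface is not an isomorphism). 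Since $\dim_k\mathfrak m_x/\mathfrak m_x^2=3$ at a rational double point, one must separately prove that $H^0(X',L)\to L\otimes\cO_{X'}/\mathfrak m_x^2$ is surjective. This is precisely the issue Catanese's ``Footnotes in the theory of I. Reider'' addresses, and it is the content of Corollary~\ref{corollary}(3): using $\pi_*\cO_X(nK_X+f^*M-2Z)=\mathfrak m_x^2\cO_{X'}(nK_{X'}+{f'}^*M)$ for the fundamental cycle $Z$ over $x$, the missing surjectivity reduces to the vanishing $H^1(X,\cO_X(nK_X+f^*M-2Z))=0$. Verifying \emph{that} vanishing --- via Lemma~\ref{special} and the inequality $((n-1)K_X-2Z)\cdot A>A^2$ for all $0<A\le X_s$, with its own case analysis involving $(A+Z)^2$ and $Z^2=-2$ --- is the actual substance of the paper's proof of Theorem~\ref{new pluri}; your argument never engages with the $-2Z$ twist at all.

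Your Reider case analysis itself (excluding the numerical types by verticality, $n\ge 3$, and the parity of $K_X\cdot E+E^2$) reproduces what the paper does in Corollary~\ref{corollary}(1)--(2) and is fine as far as it goes. Two smaller points: in positive characteristic the applicability of Reider's theorem is not automatic from $D^2\ge 10$ --- you need the Bogomolov instability hypothesis, which the paper secures via Shepherd-Barron's theorem using that $X$ is of general type and $\deg M\gg 0$; and your exclusion of horizontal $E$ should be routed through the nefness of $K_X$ on $E$ as well, but these are repairable. The tangent-space separation at the singular points of $X'$ is not: without it the proof does not establish very ampleness of $L$ on $X'$.
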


\begin{proof}
By Reider's method (see Corollary \ref{corollary}(3)), we just need to prove that 
$H^{1}(X, nK_{X}+f^{*}M-2Z)=0$, where $Z$ is the
 vertical fundamental cycle of the $(-2)$-curves lying above a
singularity of $X'$. 

By our assumption on $M$ (being sufficient ample) and applying spectral sequence, it is sufficient to show $R^{1}f_{*}\mathcal{O}_{X}(nK_{X}+f^{*}M-2Z)=0$ or, equivalently, that $H^{1}(X_{s}, \mathcal{O}_{X}(nK_{X}-2Z)|_{X_{s}})=0$ for any closed point $s\in S$ (note that $\cO_X(f^*M)|_{X_s}$ is trivial). As $\cO_{X}(K_{X})|_{X_s}\simeq \omega_{X/S}|_{X_s}$, by Lemma \ref{special} we only need to consider the case when $s$ is the
image of $Z$ in $S$. Again by 
Lemma~\ref{special}, to prove the vanishing above $s$, it is enough to show that for any divisor $0<A\leq X_{s}$, 
$$((n-1)K_{X}-2Z)\cdot A> A^2.$$ 
Suppose the contrary. Then  
$$2\geq(A+Z)^2+2\geq (n-1)K_{X}\cdot A \geq 2K_{X}\cdot A\ge 0$$ 
(note that $Z^2=-2$). This is impossible:  

(i) 
if $K_{X}\cdot A=0$, then $A$ consists of $(-2)$-curves, in particular $Z\cdot A\leq0$ by the definition of the fundamental cycle, so $(A+Z)^2\leq A^2+Z^2<-2$, contradiction; 

(ii) if $K_{X}\cdot A=1$, then $(A+Z)^2=0$, so $(A+Z)\cdot B=0$ for any
vertical divisor $B$ and thus $A^2=((A+Z)-Z)^2=-2$. This implies that 
$K_{X}\cdot A+A^2=-1$ is odd, contradiction. 
\end{proof} 

Now we can proceed to the proof of our main results.

\begin{proof}[Proof of Theorem~\ref{main} and of Proposition~\ref{Lee}] Similarly to the previous 
section, we can assume $S=\Spec R$ with $R=k[\![t]\!]$ and $k$ algebraically closed. 
Using Lemma~\ref{1}, we can descend $X\to S$ and suppose that $S$ is an 
integral projective smooth curve over $k$ of genus $g\geq 2$. 
Let $M$ be a sufficiently ample divisor on $S$. By Theorem \ref{new pluri}, $nK_{X'}+{f'}^*M$ is very ample for any $n\ge 3$. Therefore 
$\omega_{X'_s/k}^{\otimes n}\simeq \cO_{X'}(nK_{X'}+{f'}^{*}M)|_{X'_{s}}$ is also 
very ample. Similarly, if $n\ge 2$, $\omega_{X_s/k}^{\otimes n}$
is globally generated using Corollary \ref{corollary} (1). 
\end{proof}

\begin{Rem} One can also prove Theorem~\ref{main} under the assumptions of \ref{new pluri} using 
Theorem 1.2 of \cite{CFHR}. Indeed, replacing $C$ by a finite \'etale cover of sufficiently high degree if necessary, we may assume $\chi(\cO_X)\neq 1$, and $g(C)\gg 0$ so that $p_g(X)\geq 2$ (if $\chi(\cO_X)\geq 0$, then 
$p_g(X)\geq q(X)-1\ge g(C)-1\gg 2$; if $\chi(\cO_X)<0$, then $p_g(X)\geq -2\chi(\cO_X)\geq 2$ by \cite{SB}, Lemma 13). 
One then checks that the conditions of \cite{CFHR}, Theorem 1.2 are satisfied. Therefore 
$|nK_{X'}|$ is very ample if $n\geq 3$. In particular $\omega_{X'_s/k}^{\otimes n}\simeq \cO_{X'}(nK_{X'})|_{X'_{s}}$ is also very ample.
\end{Rem}

\end{section}

\begin{section}{Reider's method}

Below is the statement of Reider's method (\cite{R}; \cite{BPV}, p. 176) in any characteristic. 

\begin{Thm}[Reider's Method] \label{Reider}
Let $X$ be an integral projective smooth surface over an algebraically closed field, 
and let $L$ be a nef divisor on $X$. 
\begin{enumerate}
\item Suppose that any vector bundle $E$ of rank $2$ with $\delta(E):= c_{1}^{2}(E)-4c_{2}(E)\geq L^2-4$ is unstable in the sense of Bogomolov (\cite{Bog}; \cite{BPV}, p. 168). If $P$ is a base point of $|K_{X}+L|$, then there is an effective divisor $D$ passing through $P$ such that 
\begin{enumerate}
\item $D\cdot L=0$ and $D^2=-1$; or

\item $D\cdot L=1$ and $D^2=0$.
\end{enumerate}
\item Suppose that any vector bundle $E$ of rank $2$
  with $\delta(E)\geq L^2-8$ is unstable in the sense of Bogomolov. If
  $|K_{X}+L|$ fails to separate $P,Q$ (possibly infinite near), then
  there is an effective divisor $D$ 
passing through $P, Q$ such that 
\begin{enumerate} 
\item $D\cdot L=0$ and $D^2=-2$ or $-1$; or

\item $D\cdot L=1$ and $D^2=0$ or $-1$;  or 

\item $D\cdot L=2$ and $D^2=0$; or

\item $L^2=9$ and $L$ numerically equivalent to $3D$.\qed
\end{enumerate}
\end{enumerate}
\end{Thm}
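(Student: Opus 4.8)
The plan is to run Reider's original argument: from the failure of $|K_X+L|$ to be base-point free (case (1)) or very ample (case (2)) I would build a rank-two bundle by the Serre construction, and then exploit the assumed Bogomolov instability to extract the divisor $D$. It is convenient to treat both cases at once. In case (1) let $\xi$ be the length-one subscheme supported at the base point $P$; in case (2) let $\xi$ be a length-two subscheme, either $\{P,Q\}$ (if $P\neq Q$) or the thickened point cutting out the infinitely near direction (if $P=Q$), chosen so that $|K_X+L|$ fails to separate $\xi$. Write $\ell=\deg\xi\in\{1,2\}$. First I would record that, after choosing $\xi$ of minimal length, the base-point/separation failure is exactly the non-surjectivity of $H^0(X,\omega_X\otimes\cO_X(L))\to H^0(\xi,(\omega_X\otimes\cO_X(L))|_\xi)$, equivalently the non-vanishing $H^1(X,\cI_\xi\otimes\omega_X\otimes\cO_X(L))\neq 0$, and that this minimality guarantees that the pair $(\xi,\omega_X\otimes\cO_X(L))$ satisfies the Cayley--Bacharach property.

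By Serre duality this $H^1$ is dual to $\mathrm{Ext}^1(\cI_\xi\otimes\cO_X(L),\cO_X)$, so a nonzero class yields a nonsplit extension
$$0\to\cO_X\to E\to\cI_\xi\otimes\cO_X(L)\to 0.$$
The Cayley--Bacharach property is precisely the condition ensuring that $E$ is locally free of rank two. Taking Chern classes gives $c_1(E)=L$ and $c_2(E)=\deg\xi=\ell$, hence $\delta(E)=c_1^2(E)-4c_2(E)=L^2-4\ell$. In case (1) this equals $L^2-4$ and in case (2) it equals $L^2-8$, so in each case the corresponding instability hypothesis applies to exactly the bundle $E$ just produced.

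By that hypothesis $E$ is Bogomolov-unstable, so there is a saturated sub-line bundle $\cO_X(M)\hookrightarrow E$ and a finite subscheme $W$ with $0\to\cO_X(M)\to E\to\cI_W\otimes\cO_X(L-M)\to 0$, where $N:=2M-L$ satisfies $N\cdot H>0$ for an ample $H$ and $N^2=\delta(E)+4\deg W\ge\delta(E)$. Composing $\cO_X(M)\hookrightarrow E$ with the projection $E\to\cI_\xi\otimes\cO_X(L)$ gives a map $\cO_X(M)\to\cO_X(L)$. If it vanished, the inclusion would factor through $\cO_X\subseteq E$, making $-M$ effective, whence $N\cdot H=2M\cdot H-L\cdot H\le 0$ (using $L$ nef), contradicting $N\cdot H>0$. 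So the map is nonzero, and its divisor $D$ is effective, lies in $|L-M|$, and contains $\xi$; thus $D$ passes through $P$ (resp. through $P$ and $Q$).

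It remains to pin down $(L\cdot D,D^2)$. Writing $D\equiv L-M$ so that $N=L-2D$, the identity $N^2=\delta(E)+4\deg W$ reads $L\cdot D-D^2=\ell-\deg W\le\ell$, while nefness of $L$ gives $L\cdot D\ge 0$ and $\xi\subseteq D$ gives $D>0$. I would then feed these constraints, together with $N\cdot H>0$, into the Hodge index theorem to force $L\cdot D\in\{0,1\}$ in case (1) and $L\cdot D\in\{0,1,2\}$ in case (2), reading off the tabulated values of $D^2$; the equality case of Hodge index when $L\cdot D=2$ produces the exceptional alternative $L^2=9$, $L\equiv 3D$. I expect the main obstacle to be twofold: first, honestly deducing local freeness of $E$ from the Cayley--Bacharach property (the one genuinely delicate homological point, and where minimality of $\xi$ is used); and second, the bookkeeping of the Hodge-index endgame, where the larger values of $L\cdot D$ must be excluded and the equality cases tracked with care. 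Because the characteristic is arbitrary I would keep everything cohomological and avoid any input special to characteristic zero — in particular the Bogomolov inequality itself — which is exactly why instability is imposed as a hypothesis rather than proved.
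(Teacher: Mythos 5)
The paper offers no proof of this theorem: it is quoted from \cite{R} and \cite{BPV}, p.~176, and stated with an immediate end-of-proof mark, so there is nothing internal to compare your argument against. Your sketch is precisely the standard proof from those sources --- Serre construction from $H^1(X,\cI_\xi\otimes\omega_X\otimes\cO_X(L))\neq 0$, Cayley--Bacharach for local freeness, the instability hypothesis applied to the one bundle produced (with $\delta(E)=L^2-4\ell$ matching the threshold in each part), and the Hodge-index endgame for $N=L-2D$ --- and the outline is correct; in particular you rightly observe that imposing instability as a hypothesis, rather than invoking Bogomolov's inequality, is what makes the statement valid in every characteristic (the paper then verifies that hypothesis in positive characteristic via Shepherd-Barron inside the proof of Corollary~\ref{corollary}). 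Two deferred points need real care if this is written out. First, in part (2) your ``minimal $\xi$'' device can shrink $\xi$ to length one exactly when one of $P,Q$ is a base point of $|K_X+L|$; the construction then yields a divisor through only that point, while the statement demands $D\ni P,Q$, so you must separately produce a length-two cluster through both points satisfying Cayley--Bacharach (this degeneration is handled, not elided, in \cite{BPV}). Second, the Hodge-index step needs $N^2>0$, which does not follow from $N^2\ge\delta(E)$ alone when $L^2$ is small; it is part of the definition of Bogomolov instability (\cite{BPV}, p.~168) and should be invoked as such rather than rederived from $N^2=\delta(E)+4\deg W$.
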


\begin{Cor}\label{corollary}
Keep the above notation and suppose there exists  a
relatively minimal fibration $f: X\rightarrow S$ whose fibres have arithmetic genus 
$\geq 2$ and such that $g(S)\ge 2$. Then: 
\begin{enumerate}
\item $|nK_{X}+f^{*}M|$ is base point free if $n\geq 2$ and $M$ is sufficiently ample.

\item When $n\geq 3$ and $M$ is sufficiently ample, $|nK_{X}+f^{*}M|$
  is very ample outside the locus of vertical $(-2)$-curves and also
  separates points not connected by vertical $(-2)$-curves.  

\item Keep the hypothesis of (2). Let $f': X'\to S$ be
  the canonical model of $X$ and let $K_{X'}$ be a canonical divisor
  on $X'$. Suppose that for the fundamental cycle
  $Z\subset X$ above any singular point of $X'$ we have 
$$H^1(X, \cO_X(nK_X+f^*M-2Z))=0,$$ 
then $nK_{X'}+{f'}^*M$ is very ample.
\end{enumerate}
\end{Cor}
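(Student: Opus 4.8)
The plan is to apply Reider's method (Theorem~\ref{Reider}) with the nef divisor $L:=(n-1)K_X+f^*M$, so that $K_X+L=nK_X+f^*M$, and then to eliminate the destabilizing divisors it produces by a numerical analysis tailored to the fibration. First I would record the geometry: since $g(S)\geq 2$ and the fibres have genus $\geq 2$, any $(-1)$-curve on $X$ would map to a point of $S$ (a rational curve cannot dominate a curve of genus $\geq 2$) and hence lie in a fibre, which relative minimality forbids; so $X$ is a minimal surface of general type and $K_X$ is nef. Writing $F$ for a fibre of $f$ and using $f^*M\equiv(\deg M)F$, every effective $D$ satisfies
$$L\cdot D=(n-1)(K_X\cdot D)+(\deg M)(F\cdot D),$$
a sum of two non-negative terms, while $L^2=(n-1)^2K_X^2+2(n-1)(\deg M)(2p_a(F)-2)\to\infty$ as $M$ grows. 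For $M$ sufficiently ample $L^2$ exceeds the thresholds $4$ and $8$, so the Bogomolov-instability hypotheses of Theorem~\ref{Reider} hold: in characteristic zero this is Bogomolov's inequality, since $\delta(E)\geq L^2-4>0$ forces instability, while in positive characteristic establishing it is a genuinely delicate input needed before Reider can be invoked.

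For (1) and (2) I would feed this into Reider and kill each output divisor $D$. Because $\deg M$ is large, $L\cdot D\in\{0,1,2\}$ forces $F\cdot D=0$, i.e. $D$ is vertical. Two facts then finish the job: (a) a vertical $D$ with $K_X\cdot D=0$ is a sum of $(-2)$-curves, so by adjunction $D^2=2p_a(D)-2$ is even, excluding every case with $D^2=-1$; and (b) a vertical $D$ with $D^2=0$ is, by Zariski's lemma, a rational multiple of fibres, so $K_X\cdot D\in\{0\}\cup[2,\infty)$. For (1) (where $n-1\geq 1$) the cases $L\cdot D=0,\,D^2=-1$ and $L\cdot D=1,\,D^2=0$ are both excluded, giving base-point freeness. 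For (2) (where $n-1\geq 2$) the cases with $L\cdot D=1$ or $2$ are excluded by verticality together with (b) and $n-1\geq 2$, the case $L^2=9$ is excluded since $L^2\gg 0$, and the only surviving output is $L\cdot D=0,\,D^2=-2$, i.e. a vertical chain of $(-2)$-curves through $P,Q$. This is precisely the asserted exceptional locus, so $|nK_X+f^*M|$ is very ample off the vertical $(-2)$-curves and separates points not joined by them.

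The heart of the argument, and the step I expect to be the main obstacle, is (3): upgrading (2) to genuine very ampleness of $\mathcal{N}:=nK_{X'}+{f'}^*M$ on the \emph{singular} model $X'$. Since the $(-2)$-curves contract to rational double points, $K_{X'}$ is Cartier and $g^*\mathcal{N}=\cO_X(nK_X+f^*M)$; with $g_*\cO_X=\cO_{X'}$ and $R^1g_*\cO_X=0$ this gives $H^0(X',\mathcal{N})=H^0(X,g^*\mathcal{N})$, so the morphism attached to $\mathcal{N}$ is the one of (2) factored through $g$, which by (2) is a closed immersion away from the singular points and injective on points. What remains is to separate tangent directions at each singular point $x'$, and this is exactly where the hypothesis enters: for a rational singularity $\mathfrak{m}_{x'}\cO_X=\cO_X(-Z)$ with $Z$ the fundamental cycle, so the separation of $2$-jets at $x'$ is governed by
$$0\to\cO_X(g^*\mathcal{N}-2Z)\to\cO_X(g^*\mathcal{N})\to\cO_{2Z}\otimes g^*\mathcal{N}\to 0.$$
The assumed vanishing $H^1(X,nK_X+f^*M-2Z)=0$ makes $H^0(X,g^*\mathcal{N})\to H^0(2Z,g^*\mathcal{N}|_{2Z})$ surjective; pushing down and reading $H^0(2Z,g^*\mathcal{N}|_{2Z})$ as the $2$-jets of $\mathcal{N}$ at $x'$ yields the required separation, whence $\mathcal{N}$ is very ample. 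The delicate part I anticipate is precisely this last translation — verifying that the fundamental-cycle sequence upstairs genuinely computes tangent-vector separation at the singular point downstairs — which is what makes (3) more than a formal consequence of (2).
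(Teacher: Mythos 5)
Your proposal follows the paper's proof essentially verbatim: the same choice $L=(n-1)K_X+f^*M$ with $L^2\gg 0$, the same elimination of Reider's exceptional divisors via verticality, Zariski's lemma and the parity of $K_X\cdot D+D^2$, and the same reduction of (3) to separating tangent vectors at the rational double points through the twist by $-2Z$. The two steps you flag as delicate are precisely where the paper invokes citations rather than new arguments: Shepherd-Barron's Theorem 15 supplies Bogomolov instability in positive characteristic, and the identification of $H^0(2Z,\cdot)$ with $2$-jets at the singularity is handled by $\pi_*\cO_X(nK_X+f^*M-2Z)=\mathfrak m_x^2\cO_{X'}(nK_{X'}+{f'}^*M)$ (\cite{Ba}, Thm.~3.28) together with $R^1\pi_*\cO_X(nK_X+f^*M-2Z)=0$ from Lemma~\ref{global generated}.
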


\begin{proof}
(1) - (2) Let $L=(n-1)K_{X}+f^{*}M$, so 
$$L^2=(n-1)^2K_X^{2}+2(n-1)(2g-2)\deg M\gg 0, \quad \text{if } \deg M\gg 0.$$ 
In characteristic $0$, it is well known that any $E$ with $\delta(E)>0$ is unstable. 
In positive characteristic $p$ case, we apply
\cite{SB}, Theorem 15, which says that $E$ is
semi-stable only if either $pK_{X}^2\geq p/2(\sqrt{K_{X}^2\cdot
  \delta})+2\chi(\mathcal{O}_{X})+p(2p-1)\delta/6$ or $K_{X}^2\geq
\delta$. So anyway, when $M$ is sufficiently ample, the instability
conditions on vector bundles in the above theorem hold. Then by
standard discussions we can prove the conditions (a)(b) in \ref{Reider}(1) will not occur and in \ref{Reider}(2) only condition (a) where $L\cdot D=0, D^2=-2$ can occur, in this case $D$ is a sum of (-2)-curves.  So (1), (2) are proved. 

(3) We have $H^0(X', \cO_{X'}(nK_{X'}+{f'}^*M))=H^0(X, \cO_{X}(nK_{X}+{f}^*M))$ and the map 
$X\to\phi_{|nK_{X}+f^{*}M|}$ factors through $h : X'\to \phi_{|nK_{X}+f^{*}M|}$. 
Under the assumption of (2), $h$ is a homeomorphism and is an
isomorphism outside the singularities of $X'$. In order to prove it is an isomorphism it is sufficient to prove that $|nK_{X'}+{f'}^*M|$ separates tangent spaces of each singularities of $X'$ (see also \cite{Ca} p. 72). Let $x$ be such a singularity and $Z$ be the fundamental cycle of $(-2)$-curves lying above $x$. So what we need is to prove the following sequence is exact:
\begin{equation*}
\begin{split}
0&\rightarrow H^0(X',\mathfrak m_x^2\cO_{X'}(nK_{X'}+{f'}^*M))\rightarrow H^0(X',\mathfrak m_x\cO_{X'}(nK_{X'}+{f'}^*M))\\ &\rightarrow H^0(X', \mathfrak m_x/{\mathfrak m_x^2}\otimes\cO_{X'}(nK_{X'}+{f'}^*M)) \rightarrow 0
\end{split}
\end{equation*} 
where $\mathfrak m_x$ denotes the maximal ideal of $\cO_{X',x}$. It is 
enough to show that  
$$H^1(X', \mathfrak m_x^2\cO_{X'}(nK_{X'}+{f'}^*M))=0.$$ 
Let $\pi$ denote the canonical morphism from $X$ to $X'$, then it is well 
known that $\pi_*\cO_X(nK_X+f^*M-2Z)=\mathfrak m_x^2\cO_{X'}(nK_{X'}+{f'}^*M)$ (\cite{Ba}, Thm. 3.28). For any irreducible component $E$ of the exceptional locus of $X\to X'$, we have
$$E\cdot (nK_X+f^*M-2Z)=nE\cdot K_X-2E\cdot Z=nE \cdot \omega_{X/S}-2E\cdot Z\ge 0$$
by the minimality of $X\to S$ and because $Z$ is a fundamental cycle. 
Therefore it follows from  Lemma \ref{global generated} that 
$R^1\pi_*\cO_X(nK_X+f^*M-2Z)=0$, hence $$H^1(X', \mathfrak m_x^2\cO_{X'}(nK_{X'}+{f'}^*M))=H^1(X, \cO_X(nK_X+f^*M-2Z))=0$$ by assumption.
\end{proof}

\end{section}

\end{document}